\def\BState{\State\hskip-\ALG@thistlm}
\def\downbar#1{
\setbox10=\hbox{$#1$}
            \dimen10=\ht10 \advance\dimen10 by 2.5pt
            \ifdim \dimen10<15pt %equals approximately 0.5cm
               \advance\dimen10 by -0.5pt
               \dimen11=\dimen10
               \advance\dimen10 by 2.5pt
               \lower \dimen11
            \else \lower \ht10 \fi
            \hbox {\hskip 1.5pt \vrule height \dimen10 depth \dp10}}
\def\upbar#1{
\setbox10=\hbox{$#1$}
            \dimen10=\ht10 \advance\dimen10 by \dp10 \advance\dimen10 by 2.5pt
            \ifdim \dimen10<15pt %equals approximately 0.5cm
                \advance\dimen10 by 2pt \fi
            \raise 2.5pt \hbox {\hskip -1.5pt \vrule height \dimen10}}
\newtheorem{theorem}{\bf Theorem}[section]
\newtheorem{proposition}{\bf Proposition}[section]
\newtheorem{lemma}{\bf Lemma}[section]
\newtheorem{remark}{\bf Remark}[section]
\numberwithin{equation}{section}
\begin{document}
\title[Appell-type orthogonal polynomials on lattices]{Some Appell-type orthogonal polynomials on lattices}

%%%%%%%%%%%%%%%%%%%%%%%%%%%%%%%%%%%%%%%%%%%%%%%%%%%%%%%%%%%%%%%%%%%%%%%%%%%%%
\author{D. Mbouna}
\address{D. Mbouna \\Department of Mathematics, Faculty of Sciences, University of Porto, Campo Alegre st., 687, 4169-007 Porto, Portugal}
\email{dieudonne.mbouna@fc.up.pt}

%\author{R. \'Alvarez-Nodarse}
%\address{University of Coimbra, CMUC, Dep. Mathematics, 3001-501 Coimbra, Portugal}
%\email{josep@mat.uc.pt}
%\author{K. Castillo}
%
%\email{kenier@mat.uc.pt}
%\author{D. Mbouna}
\author{A. Suzuki}
\address{A. Suzuki \\University of Coimbra, CMUC, Dep. Mathematics, 3001-501 Coimbra, Portugal}
\email{asuzuki@uc.pt}
%%%%%%%%%%%%%%%%%%%%%%%%%%%%%%%%%%%%%%%%%%%%%%%%%%%%%%%%%%%%%%%%%%%%%%%%%%%%%

\subjclass[2010]{42C05, 33C45}
\date{\today}
\keywords{Al-Salam-Chihara polynomials, lattice, Appell orthogonal polynomial}

\maketitle
\begin{abstract}
We investigate on some Appell-type orthogonal polynomial sequences on $q$-quadratic lattices and we provide some entire new characterizations of the Al-Salam-Chihara polynomials (including the Rogers $q$-Hermite polynomials). The corresponding regular forms are well described. The proposed method can be applied to similar and to more general problems involving the Askey-Wilson and the Averaging operators, in order to obtain new characterization theorems for classical and semiclassical orthogonal polynomials on lattices.
\end{abstract}

%
%In this chapter we show that the only sequences of orthogonal polynomials $(P_n)_{n\geq 0}$ satisfying
%\begin{align*}
%\phi(x)\mathcal{D}_q P_{n}(x)=a_n\mathcal{S}_q P_{n+1}(x) +b_n\mathcal{S}_q P_n(x) +c_n\mathcal{S}_q P_{n-1}(x),
%\end{align*}
%($c_n\neq 0$) where $\phi$ is a well chosen polynomial of degree at most two, $\mathcal{D}_q$ is the Askey-Wilson operator and $\mathcal{S}_q$ the averaging operator, are the multiple of Askey-Wilson polynomials, or specific or limiting cases of them. We also provide other characterization of such orthogonal polynomial sequences.
%

\section{Introduction}\label{introduction}
Classical orthogonal polynomial sequences (OPS) are certainly the most studied ones. This class of OPS has some beautiful properties and characterizations as well as applications in other related fields (number theory, probability, mathematical physics, approximation theory and many others mathematics branches). For instance, their $\textit{derivatives}$ are also OPS. One special case of this family is the situation where OPS and their $\textit{derivatives}$ coincide: this is known in the literature as Appell OPS. This notion was introduced in 1880 in a work by P. Appell \cite{PA1880}. That is the problem of finding polynomial sequences, $(f_n)_{n\geq 0}$, for which the following equation holds
\begin{align}\label{appell-general-equation-intro}
\mathrm{D} f_n(x)=r_nf_{n-1}(x) \quad (n=0,1,\ldots)\;,
\end{align} 
where $(r_n)_{n\geq 0}$ is a nonzero complex sequence of numbers and $\mathrm{D}$ is a lowering operator (this means an operator reducing by one the degree of any polynomial sequence). Since that time, all polynomial sequences with property \eqref{appell-general-equation-intro} are called Appell sequences (see \cite{A-1967, AS1954}). Along this work, we will focus only on Appell OPS.

We recall that if $\mathrm{D} =d/dx$ in \eqref{appell-general-equation-intro}, then the corresponding orthogonal polynomial sequence is the Hermite polynomial (see \cite{Al-Salam-1972}). If $\mathrm{D}$ is replaced by the $q$-Jackson operator $D_q$ (respectively the Hahn operator $D_{q,\omega}$) defined by 
\begin{align*}
D_{q,\omega}f(x)=\frac{f(qx+\omega)-f(x)}{(q-1)x+\omega},\quad 0<q<1,~\omega \in \mathbb{C}\;,
\end{align*}  
where $D_{q}=D_{q,0}$, then the corresponding Appell OPS are, up to an affine transformation of the variable, the Al-Salam-Carlitz polynomials (see \cite{RKDP2020, Griff2006}). In \cite{ALPM2008} it is studied the case of \eqref{appell-general-equation-intro} where 
$$ \mathrm{D}=2\frac{d}{dx}\; x\;\frac{d}{dx}  +\epsilon \frac{d}{dx},\quad \epsilon=\pm 1\;,$$ providing then a new characterization of the Laguerre polynomials. Such OPS received considerable attention along the last decade and since that time. Now consider the Askey-Wilson operator, $\mathcal{D}_q$, which is defined by
\begin{align*}%\label{AWxs}
\mathcal{D}_q\, p(x(s))= \frac{p(x(s+1/2))-p(x(s-1/2))}{x(s+1/2)-x(s-1/2)},\quad x(s)=\mbox{$\frac12$}(q^s +q^{-s})\;,
\end{align*}
for every polynomial $p$. We assume that $0<q<1$. (Taking $q^s=e^{i \theta}$ we recover $\mathcal{D}_q$ as defined in  \cite[(21.6.2)]{I2005}.) We define the averaging operator by
\begin{align*}%\label{AWxs}
\mathcal{S}_q\, p(x(s))= \frac{1}{2}\Big( p(x(s+1/2))+p(x(s-1/2))\Big),\quad x(s)=\mbox{$\frac12$}(q^s +q^{-s})\;.
\end{align*}

The problem of finding OPS solutions of \eqref{appell-general-equation-intro} whenever $\mathrm{D}=\mathcal{D}_q$ appeared as a special case of a problem posed by M. Ismail in \cite[Conjecture 24.7.8]{I2005}. This case of \eqref{appell-general-equation-intro} was firstly solved by W. Al-Salam in \cite{A-1995} and secondly by J. Galiffa and W. Ong  in \cite{DB2014} using different methods and characterising the Rogers $q$-Hermite polynomials as the only solutions. Despite this, none of the methods used in both works could be useful to solve the conjecture \cite[Conjecture 24.7.8]{I2005} in its entire form. This is only due the complexity of the Askey-Wilson operator and its properties. Recently in \cite{KDPconj}, the authors addressed this conjecture in its entire form using some new techniques. In addition, a situation of \eqref{appell-general-equation-intro} where operators $\mathcal{D}_q$ and $\mathcal{S}_q$ are both involved as the following equation
\begin{align*}
\mathcal{D}_qf_n(x)=r_n\mathcal{S}_qf_{n-1}(x)\quad (n=0,1,\ldots)\;,
\end{align*}
is considered in \cite{KCDMJP2021-a} characterizing some special cases of the Askey-Wilson polynomials. The purpose of this work is to solve \eqref{appell-general-equation-intro} for operators $\mathrm{D}=\mathcal{S}_q\mathcal{D}_q$ and $\mathrm{D}=\mathcal{D}_q\mathcal{S}_q$. This leads to a new characterization of the Al-Salam-Chihara polynomials. In addition we also characterize the corresponding regular forms. This definitely provides some ideas on polynomial bases to use when dealing with problems with the averaging and the Askey-Wilson operators. The method suggests a description/study of semiclassical OPS by deriving a system of difference or differential equations satisfied by their coefficients of the three term recurrence relation, since from this, some asymptotic behaviours and/or full expressions of these coefficients can be obtained. This approach is also presented in \cite{CFZ2019, DFS2021, FR2018} and some references therein, where authors used Painlev\'e equations to study the differential (or difference) equations satisfied by the coefficients of the three term recurrence relation of semiclassical OPS.

The structure of the paper is as follows. Section 2 presents some basic facts of the algebraic theory of OPS together with some useful results. Sections \ref{first-case-main-results} and \ref{second-case-main-results}  contain our main results for each situation.

\section{Background and preliminary results}
Our approach is based upon the algebraic theory of orthogonal polynomials developed by P. Maroni (see \cite{M1991}). Let $\mathcal{P}$ be the vector space of all polynomials with complex coefficients and let $\mathcal{P}^*$ be its algebraic dual. Given a simple set of polynomials $(R_n)_{n\geq0}$
(meaning that each $R_n\in\mathcal{P}$ and $\deg R_n=n$ for each $n=0,1,\ldots$),
the corresponding dual basis is a sequence of linear functionals
${\bf e}_n:\mathcal{P}\to\mathbb{C}$ such that
$$
\langle{\bf e}_n,R_j\rangle:=\delta_{n,j}\quad(n,j=0,1,\ldots)\;,
$$
where $\delta_{n,j}$ denotes the Kronecker's symbol.
In particular, if $(R_n)_{n\geq0}$ is a monic OPS with respect to ${\bf w}\in\mathcal{P}'$, i.e.,
there exists a sequence of nonzero complex numbers $(k_n)_{n\geq0}$ such that the orthogonality condition
$$
\langle{\bf w},R_jR_n\rangle:=k_n\delta_{j,n}\quad(j,n=0,1,\ldots)
$$
holds, then the corresponding dual basis is explicitly given by
\begin{align}\label{expression-an}
{\bf e}_n =\left\langle {\bf u} , R_n ^2 \right\rangle ^{-1} R_n{\bf w}.
\end{align}
The left multiplication of a functional ${\bf w}$ by a polynomial $f$ is defined by
$$
\left\langle f {\bf w}, p  \right\rangle =\left\langle {\bf w}, fp  \right\rangle \quad (p\in \mathcal{P}).
$$
Any functional ${\bf w} \in \mathcal{P}^*$ (when $\mathcal{P}$ is endowed with an appropriate strict inductive limit topology, see \cite{M1991}) can be written in the sense of the weak topology in $\mathcal{P}^*$ as 
\begin{align*}
{\bf w} = \sum_{n=0} ^{\infty} \left\langle {\bf w}, R_n \right\rangle {\bf e}_n.
\end{align*}
It is known that a monic OPS, $(P_n)_{n\geq 0}$, is characterized by the following three-term recurrence relation (TTRR):
\begin{align}\label{TTRR_relation}
R_{-1} (z)=0, \quad R_{n+1} (z) =(z-B_n)R_n (z)-C_n R_{n-1} (z) \quad (C_n \neq 0),
\end{align}
and, therefore,
\begin{align}\label{TTRR_coefficients}
B_n = \frac{\left\langle {\bf w} , zR_n ^2 \right\rangle}{\left\langle {\bf w} , R_n ^2 \right\rangle},\quad C_{n+1}  = \frac{\left\langle {\bf w} , R_{n+1} ^2 \right\rangle}{\left\langle {\bf w} , R_n ^2 \right\rangle}.
\end{align}

The Askey-Wilson and the averaging operators  induce two elements on $\mathcal{P}^*$, say $\mathbf{D}_q$ and $\mathbf{S}_q$, via the following definition (see \cite{FK-NM2011}): 
\begin{align*}
\langle \mathbf{D}_q{\bf w},f\rangle=-\langle {\bf w},\mathcal{D}_q f\rangle,\quad \langle\mathbf{S}_q{\bf w},f\rangle=\langle {\bf w},\mathcal{S}_q f\rangle.
\end{align*}

Hereafter we denote $z=x(s)=(q^s+q^{-s})/2$. Then the following proposition holds.
\begin{proposition}\cite{KDP2021}
Let $f,g\in\mathcal{P}$ and ${\bf w}\in\mathcal{P}^*$. Then the following equations hold.
\begin{align}
\mathcal{D}_q \big(fg\big)&= \big(\mathcal{D}_q f\big)\big(\mathcal{S}_q g\big)+\big(\mathcal{S}_q f\big)\big(\mathcal{D}_q g\big), \label{def-Dx-fg} \\[7pt]
\mathcal{S}_q \big( fg\big)&=\big(\mathcal{D}_q f\big) \big(\mathcal{D}_q g\big)\texttt{U}_2  +\big(\mathcal{S}_q f\big) \big(\mathcal{S}_q g\big), \label{def-Sx-fg} \\[7pt]
\alpha \mathcal{S}_q ^2 f&=\mathcal{S}_q \big(\texttt{U}_1 \mathcal{D}_qf\big) +\texttt{U}_2\mathcal{D}_q ^2 f +\alpha f ,\label{def-Sx^2f}\\[7pt]
%f\mathcal{D}_qg&=\mathcal{D}_q\left[ \Big(\mathcal{S}_qf-\frac{\texttt{U}_1}{\alpha}\mathcal{D}_qf \Big)g\right]-\frac{1}{\alpha}\mathcal{S}_q \Big(g\mathcal{D}_q %f\Big) , \label{def-fDxg} \\[7pt]
\mathcal{D}_q ^n\mathcal{S}_qf &=\alpha_n \mathcal{S}_q\mathcal{D}_q ^n f+\gamma_n \texttt{U}_1\mathcal{D}_q ^{n+1}f   ,\label{def-DxnSxf}\\[7pt]
f{\bf D}_q {\bf w}&={\bf D}_q\left(\mathcal{S}_qf~{\bf w}  \right)-{\bf S}_q\left(\mathcal{D}_qf~{\bf w}  \right), \label{def-fD_x-u}\\[7pt]
\alpha \mathbf{D}_q ^n \mathbf{S}_q {\bf w}&= \alpha_{n+1} \mathbf{S}_q \mathbf{D}_q^n {\bf w}
+\gamma_n \texttt{U}_1\mathbf{D}_q^{n+1}{\bf w}, \label{DxnSx-u} 
\end{align}
with $n=0,1,\ldots$, where $\alpha =(q^{1/2}+q^{-1/2})/2$ and $$\texttt{U}_1 (z)=(\alpha ^2 -1)z,\;~\texttt{U}_2 (z)=(\alpha ^2 -1)(z^2-1)\;.$$
\end{proposition}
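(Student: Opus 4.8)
The plan is to push every identity down to the variable $s$ and reduce it to a short manipulation of the shift operators $p(x(s))\mapsto p_{\pm}:=p(x(s\pm1/2))$. Since $\mathcal{D}_q$ and $\mathcal{S}_q$ map $\mathcal{P}$ into $\mathcal{P}$ and a polynomial is determined by its values on the infinite set $\{x(s):s\in\mathbb{C}\}$, it suffices to verify \eqref{def-Dx-fg}--\eqref{def-DxnSxf} pointwise in $s$. One records first the elementary facts
\[
x_{+}-x_{-}=\tfrac12(q^{1/2}-q^{-1/2})(q^{s}-q^{-s}),\qquad x_{+}+x_{-}=2\alpha\,x(s),\qquad (x_{+}-x_{-})^{2}=4\,\texttt{U}_2(x(s))
\]
(the last because $(q^{1/2}-q^{-1/2})^{2}=4(\alpha^{2}-1)$ and $(q^{s}-q^{-s})^{2}=4(x(s)^{2}-1)$), together with $f_{+}-f_{-}=(x_{+}-x_{-})\mathcal{D}_qf$ and $f_{+}+f_{-}=2\mathcal{S}_qf$. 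Then \eqref{def-Dx-fg} and \eqref{def-Sx-fg} fall out of the two algebraic identities $f_{+}g_{+}-f_{-}g_{-}=\tfrac12(f_{+}-f_{-})(g_{+}+g_{-})+\tfrac12(f_{+}+f_{-})(g_{+}-g_{-})$ and $f_{+}g_{+}+f_{-}g_{-}=\tfrac12(f_{+}-f_{-})(g_{+}-g_{-})+\tfrac12(f_{+}+f_{-})(g_{+}+g_{-})$, on dividing the first by $x_{+}-x_{-}$ and substituting as above in both.

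For the second-order relations I would use that on the Chebyshev-type basis $T_{k}(x(s)):=\tfrac12(q^{ks}+q^{-ks})$ the shifts act almost diagonally: $\mathcal{S}_qT_{k}=\tfrac12(q^{k/2}+q^{-k/2})\,T_{k}$ and $\mathcal{D}_qT_{k}$ equals $\frac{q^{k/2}-q^{-k/2}}{q^{1/2}-q^{-1/2}}$ times the degree-$(k-1)$ polynomial $(q^{ks}-q^{-ks})/(q^{s}-q^{-s})$. Testing on this basis yields the operator identities
\[
\mathcal{D}_q\mathcal{S}_q=\alpha\,\mathcal{S}_q\mathcal{D}_q+\texttt{U}_1\mathcal{D}_q^{2},\qquad
\mathcal{S}_q^{2}=\texttt{U}_1\mathcal{S}_q\mathcal{D}_q+\alpha\,\texttt{U}_2\mathcal{D}_q^{2}+\mathrm{Id},
\]
and the second is equivalent to \eqref{def-Sx^2f} once \eqref{def-Sx-fg} is used to expand $\mathcal{S}_q(\texttt{U}_1\mathcal{D}_qf)$, because $\mathcal{D}_q\texttt{U}_1=\alpha^{2}-1$ is a constant and $\mathcal{S}_q\texttt{U}_1=\alpha\,\texttt{U}_1$. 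Identity \eqref{def-DxnSxf} then follows by induction on $n$: applying $\mathcal{D}_q$ to the formula for $n$, using $\mathcal{D}_q\mathcal{S}_q=\alpha\mathcal{S}_q\mathcal{D}_q+\texttt{U}_1\mathcal{D}_q^{2}$ on the resulting term $\mathcal{D}_q(\mathcal{S}_q\mathcal{D}_q^{n}f)$ and \eqref{def-Dx-fg} (again via $\mathcal{D}_q\texttt{U}_1=\alpha^{2}-1$, $\mathcal{S}_q\texttt{U}_1=\alpha\texttt{U}_1$) on $\mathcal{D}_q(\texttt{U}_1\mathcal{D}_q^{n+1}f)$, one obtains the coupled recursion $\alpha_{n+1}=\alpha\alpha_{n}+(\alpha^{2}-1)\gamma_{n}$, $\gamma_{n+1}=\alpha_{n}+\alpha\gamma_{n}$ with $\alpha_{0}=1$, $\gamma_{0}=0$. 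Diagonalising the $2\times2$ matrix (eigenvalues $q^{\pm1/2}$) gives $\alpha_{n}=\tfrac12(q^{n/2}+q^{-n/2})$ and $\gamma_{n}=(q^{n/2}-q^{-n/2})/(q^{1/2}-q^{-1/2})$.

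The functional identities follow by transposition. For \eqref{def-fD_x-u}, for any $p\in\mathcal{P}$ one has $\langle f\,\mathbf{D}_q{\bf w},p\rangle=-\langle{\bf w},\mathcal{D}_q(fp)\rangle$; expanding $\mathcal{D}_q(fp)$ by \eqref{def-Dx-fg} and reading the two pieces back through the definitions gives $\langle\mathbf{D}_q(\mathcal{S}_qf\,{\bf w}),p\rangle-\langle\mathbf{S}_q(\mathcal{D}_qf\,{\bf w}),p\rangle$, and since $p$ is arbitrary the identity holds in $\mathcal{P}^{*}$. For \eqref{DxnSx-u}, pairing with an arbitrary $f$ and unwinding the definitions of $\mathbf{D}_q$, $\mathbf{S}_q$ and of left multiplication reduces the claim to the polynomial identity $\alpha\,\mathcal{S}_q\mathcal{D}_q^{n}f=\alpha_{n+1}\mathcal{D}_q^{n}\mathcal{S}_qf-\gamma_{n}\,\mathcal{D}_q^{n+1}(\texttt{U}_1f)$, which follows from \eqref{def-DxnSxf} together with a Leibniz-type expansion of $\mathcal{D}_q^{m}(\texttt{U}_1f)$ proved by induction from $\mathcal{D}_q(\texttt{U}_1h)=(\alpha^{2}-1)\mathcal{S}_qh+\alpha\,\texttt{U}_1\mathcal{D}_qh$.

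I expect the main work to lie in the higher-order identities \eqref{def-DxnSxf} and \eqref{DxnSx-u}: correctly pinning down the sequences $(\alpha_{n})$ and $(\gamma_{n})$ and keeping the induction bookkeeping consistent, in particular the shift from $\alpha_{n}$ to $\alpha_{n+1}$ in \eqref{DxnSx-u} that is produced by commuting $\mathcal{D}_q^{n+1}$ past the linear multiplier $\texttt{U}_1$. By contrast, the product rules \eqref{def-Dx-fg}--\eqref{def-Sx-fg} and the base cases are one-line computations once everything is transported to the shift operators.
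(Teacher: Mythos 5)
Your proposal is correct, and it is worth noting that this paper does not actually prove the proposition at all: it is quoted from the reference \cite{KDP2021}, so your argument has to be judged as a self-contained reconstruction, which it essentially is, and it follows the same standard route as that reference (lattice product rules, induction on $n$, duality for the functional identities). The elementary facts you record ($x_{+}+x_{-}=2\alpha x$, $(x_{+}-x_{-})^{2}=4\texttt{U}_2$) do give \eqref{def-Dx-fg}--\eqref{def-Sx-fg} immediately; the two second-order operator identities you assert are true (they can be checked pointwise in $s$ just as easily as on the basis $T_k$ --- for the basis route you would still need the action of $\mathcal{D}_q,\mathcal{S}_q$ on the second Chebyshev-type family, which you leave implicit); your coupled recursion $\alpha_{n+1}=\alpha\alpha_n+(\alpha^2-1)\gamma_n$, $\gamma_{n+1}=\alpha_n+\alpha\gamma_n$ is exactly what the inductive step produces and is solved by the stated $\alpha_n,\gamma_n$; and the transposition arguments for \eqref{def-fD_x-u} and \eqref{DxnSx-u} are sound, including the reduction of \eqref{DxnSx-u} to the polynomial identity $\alpha\,\mathcal{S}_q\mathcal{D}_q^{n}f=\alpha_{n+1}\mathcal{D}_q^{n}\mathcal{S}_qf-\gamma_n\mathcal{D}_q^{n+1}(\texttt{U}_1 f)$. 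The only step you leave unstated there is the final bookkeeping: the Leibniz-type expansion is $\mathcal{D}_q^{m}(\texttt{U}_1 f)=(\alpha^2-1)\gamma_m\,\mathcal{S}_q\mathcal{D}_q^{m-1}f+\alpha_m\texttt{U}_1\mathcal{D}_q^{m}f$, and combining it with \eqref{def-DxnSxf} reduces the claim to the constant identity $\alpha_n\alpha_{n+1}-(\alpha^2-1)\gamma_n\gamma_{n+1}=\alpha$, which indeed holds; spelling this out would close the sketch completely, but it is a one-line computation, not a gap in the method.
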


It is known (see \cite[Proposition 2.1]{KDP2021})that 
\begin{align}
\mathcal{D}_q z^n =\gamma_n z^{n-1}+u_nz^{n-3}+\cdots,\quad \mathcal{S}_q z^n =\alpha_n z^n+\widehat{u}_nz^{n-2}+\cdots, \label{Dx-xnSx-xn}
\end{align}
with $n=0,1,\ldots$, where 
\begin{align*}
&\alpha_n= \mbox{$\frac12$}(q^{n/2} +q^{-n/2})\;,\quad \gamma_n = \frac{q^{n/2}-q^{-n/2}}{q^{1/2}-q^{-1/2}}\;,\\
&u_n= \mbox{$\frac14$}\big(n\gamma_{n-2}-(n-2)\gamma_n  \big)\;,\quad\widehat{u}_n=\mbox{$\frac n4$}(\alpha_{n-2}-\alpha_n)\;.
\end{align*}
We set $\gamma_{-1}:=-1$ and $\alpha_{-1}:=\alpha$. Recall that the monic Al-Salam-Chihara polynomials, $Q_n(x;a,b|q)$, depend on two real parameters $a$ and $b$, are characterized by %the TTRR
$$
\begin{array}{rcl}
xQ_{n}(x;a,b|q)&=&Q_{n+1}(x;a,b|q)+\,\mbox{$\frac12$}\,(a+b)q^n\,Q_{n}(x;a,b|q) \\ [0.25em]
&&\displaystyle\, +\,\mbox{$\frac14$}\,(1-ab q^{n-1})(1-q^n)\,Q_{n-1}(x;a,b;q)
\end{array}
$$
($n=0,1,\ldots$), provided we define $Q_{-1}(x;a,b|q)=0$
(see e.g. \cite{I2005}).
%(see \cite[pp.\;379 and 391]{IsmailBook2005}).
%provided we define $\widehat{P}_{-1}^{(\alpha,\beta)}(x|q):=0$.
Further, up to normalization, the Rogers $q-$Hermite polynomials are the special case $a=b=0$ of the Al-Salam-Chihara polynomials. The following result is useful.
\begin{theorem}\label{main-Thm1}\cite[Theorem 4.1]{KDP2021}
Let $(P_n)_{n\geq 0}$ be a monic OPS with respect to ${\bf w} \in \mathcal{P}^*$. 
Suppose that ${\bf u}$ satisfies the distributional equation
\begin{align}\label{x-pearson}
{\bf D}_q(\phi {\bf w})={\bf S}_q(\psi {\bf w})\;,
\end{align}
where $\phi(z)=az^2+bz+c$ and $\psi(z)=dz+e$, with $d\neq0$.
Then $(P_n)_{n\geq 0}$ satisfies \eqref{TTRR_relation} with
\begin{align}
B_n  = \frac{\gamma_n e_{n-1}}{d_{2n-2}}
-\frac{\gamma_{n+1}e_n}{d_{2n}},\quad
C_{n+1}  =-\frac{\gamma_{n+1}d_{n-1}}{d_{2n-1}d_{2n+1}}\phi^{[n]}\left( -\frac{e_{n}}{d_{2n}}\right),\label{Bn-Cn-Dx}
\end{align}
 where $d_n=a\gamma_n+d\alpha_n$, $e_n=b\gamma_n+e\alpha_n$, and \begin{align*}
\phi^{[n]}(z)=\big(d(\alpha^2-1)\gamma_{2n}+a\alpha_{2n}\big)
\big(z^2-1/2\big)+\big(b\alpha_n+e(\alpha^2-1)\gamma_n\big)z+ c+a/2\;.
\end{align*}
\end{theorem}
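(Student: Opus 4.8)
The plan is to translate the distributional equation into polynomial identities for the $P_n$, isolate finite structure relations, and then read off $B_n$ and $C_{n+1}$ by comparing coefficients and squared norms. First I would put \eqref{x-pearson} in weak form: by the definitions of ${\bf D}_q,{\bf S}_q$ it is equivalent to $\langle{\bf w},\,\phi\,\mathcal{D}_qp+\psi\,\mathcal{S}_qp\rangle=0$ for every $p\in\mathcal{P}$. Set $\mathcal{M}[p]:=\phi\,\mathcal{D}_qp+\psi\,\mathcal{S}_qp$ and $\mathcal{N}[p]:=\phi\,\mathcal{S}_qp+\psi\,\texttt{U}_2\,\mathcal{D}_qp$; by \eqref{Dx-xnSx-xn}, $\mathcal{M}$ raises degrees by one with $\mathcal{M}[z^n]=d_nz^{n+1}+e_nz^{n}+\cdots$, where $d_n=a\gamma_n+d\alpha_n$ and $e_n=b\gamma_n+e\alpha_n$. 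Expanding $\mathcal{M}[fg]$ and $\mathcal{N}[fg]$ with \eqref{def-Dx-fg}--\eqref{def-Sx-fg} yields the exact polynomial identities $\mathcal{M}[fg]=(\mathcal{S}_qg)\,\mathcal{M}[f]+(\mathcal{D}_qg)\,\mathcal{N}[f]$ and $\mathcal{N}[fg]=(\mathcal{S}_qg)\,\mathcal{N}[f]+\texttt{U}_2(\mathcal{D}_qg)\,\mathcal{M}[f]$; pairing the first with ${\bf w}$ gives the bilinear relation
$$
\big\langle{\bf w},\,(\mathcal{S}_qg)\,\mathcal{M}[f]+(\mathcal{D}_qg)\,\mathcal{N}[f]\big\rangle=0\qquad(f,g\in\mathcal{P}),
$$
and the case $g=z$ gives the recursion $\mathcal{M}[P_n]=(\alpha z-B_{n-1})\mathcal{M}[P_{n-1}]+\mathcal{N}[P_{n-1}]-C_{n-1}\mathcal{M}[P_{n-2}]$ (and the analogue for $\mathcal{N}[P_n]$).

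The core step is the structure relation: $\mathcal{M}[P_n]\in\mathrm{span}\{P_{n-1},P_n,P_{n+1}\}$ and, in tandem, $\mathcal{N}[P_n]\in\mathrm{span}\{P_{n-2},\dots,P_{n+2}\}$. I would prove these together by induction on $n$, the cases $n\le2$ being immediate (there, with $P_{-1}:=0$, the claim reduces to $\langle{\bf w},\mathcal{M}[P_n]\rangle=0$, i.e. to \eqref{x-pearson} tested on $P_n$). The recursions place $\mathcal{M}[P_n],\mathcal{N}[P_n]$ a priori in slightly wider spans, which one then contracts by means of the bilinear relation: expanding the test polynomial $P_k$ in the graded bases $\{\mathcal{S}_qP_j\}_j$ and $\{\mathcal{D}_qP_j\}_{j\ge1}$ turns $\langle{\bf w},P_k\mathcal{M}[P_n]\rangle$ into a combination of coefficients of $\mathcal{N}[P_n]$, and $\langle{\bf w},P_k\mathcal{N}[P_n]\rangle$ into one of coefficients of $\mathcal{M}[P_n]$, of index below the already controlled range, forcing the spurious terms to vanish. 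Keeping track of exactly how many terms survive here is, I expect, the main obstacle; everything past it is (intricate but) mechanical.

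Granting the structure relation, write $\mathcal{M}[P_n]=d_nP_{n+1}+\widehat{e}_nP_n+\widehat{f}_nP_{n-1}$ and $P_n=z^n+\eta_nz^{n-1}+\cdots$. Matching the coefficients of $P_n$ and of $P_{n-1}$ in this relation and in its $\mathcal{N}$-companion — using \eqref{Dx-xnSx-xn} to second order and \eqref{TTRR_relation} to move between the monomial and the $P$ bases — produces a first-order system which solves, telescoping from $\eta_0=0$, to $\eta_n=\gamma_ne_{n-1}/d_{2n-2}$; since $B_n=\eta_n-\eta_{n+1}$ this is the stated formula for $B_n$. For $C_{n+1}=\langle{\bf w},P_{n+1}^2\rangle/\langle{\bf w},P_n^2\rangle$, test the bilinear relation at $f=g=P_n$: because $\mathcal{M}[P_n]$ is three-term, $\langle{\bf w},(\mathcal{S}_qP_n)\mathcal{M}[P_n]\rangle$ equals $\alpha_n\widehat{e}_n\langle{\bf w},P_n^2\rangle$ plus a multiple of $\langle{\bf w},P_{n-1}^2\rangle$, and — after the apparent $\langle{\bf w},P_{n-2}^2\rangle$-contribution is absorbed using the $P_{n-2}$-coefficient of $\mathcal{N}[P_n]$ found above — so does $\langle{\bf w},(\mathcal{D}_qP_n)\mathcal{N}[P_n]\rangle$, leaving a first-order recursion for $\langle{\bf w},P_n^2\rangle$. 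Its solution requires $\widehat{f}_n=\langle{\bf w},P_{n-1}\mathcal{M}[P_n]\rangle/\langle{\bf w},P_{n-1}^2\rangle$, and computing that bracket directly — via \eqref{Dx-xnSx-xn} together with \eqref{def-Sx^2f} and \eqref{def-DxnSxf} — is precisely where $\phi$, corrected by the $\texttt{U}_1,\texttt{U}_2$ terms, becomes the quadratic $\phi^{[n]}$ evaluated at the node $-e_n/d_{2n}$ (which equals $-\eta_{n+1}/\gamma_{n+1}$). Carrying this through, and using the identities $\gamma_n\alpha_m\pm\gamma_m\alpha_n=\gamma_{n\pm m}$, $2\alpha_n\alpha_m=\alpha_{n+m}+\alpha_{n-m}$ and $\alpha_n^2+(\alpha^2-1)\gamma_n^2=\alpha_{2n}$ to collapse the sums, yields $C_{n+1}=-\dfrac{\gamma_{n+1}d_{n-1}}{d_{2n-1}d_{2n+1}}\,\phi^{[n]}\!\big(-e_n/d_{2n}\big)$.
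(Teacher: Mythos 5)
First, note that the paper does not prove Theorem \ref{main-Thm1} at all: it is imported verbatim from \cite[Theorem 4.1]{KDP2021}, so there is no in-paper proof to compare against. Your setup is sound and matches the spirit of the source: the weak form $\langle{\bf w},\phi\,\mathcal{D}_qp+\psi\,\mathcal{S}_qp\rangle=0$, the computation $\mathcal{M}[z^n]=d_nz^{n+1}+e_nz^n+\cdots$ via \eqref{Dx-xnSx-xn}, and the two product identities for $\mathcal{M}[fg]$ and $\mathcal{N}[fg]$ are all correct, as are the small identities you quote at the end (e.g. $\alpha_n^2+(\alpha^2-1)\gamma_n^2=\alpha_{2n}$ and $-e_n/d_{2n}=-\eta_{n+1}/\gamma_{n+1}$).

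The genuine gap is in the step you yourself flag as the main obstacle: the proof of the structure relations $\mathcal{M}[P_n]\in\mathrm{span}\{P_{n-1},P_n,P_{n+1}\}$ and $\mathcal{N}[P_n]\in\mathrm{span}\{P_{n-2},\dots,P_{n+2}\}$. Your tandem induction uses only one family of constraints, namely $\langle{\bf w},(\mathcal{S}_qg)\mathcal{M}[P_n]\rangle=-\langle{\bf w},(\mathcal{D}_qg)\mathcal{N}[P_n]\rangle$, read in both directions: expanding $P_k$ in $\{\mathcal{S}_qP_j\}$ reduces $\langle{\bf w},P_k\mathcal{M}[P_n]\rangle$ to brackets of $\mathcal{N}[P_n]$ against polynomials of degree $\le k-1$, and expanding $P_{k-1}$ in $\{\mathcal{D}_qP_j\}_{j\ge1}$ reduces those back to brackets of $\mathcal{M}[P_n]$ against $\mathcal{S}_qP_j$ with $j\le k$ --- i.e.\ back to degree $k$, which is where you started. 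A single linear relation between the two families of moments cannot force both to vanish. The second product identity does not rescue this as stated, because pairing $\mathcal{N}[fg]=(\mathcal{S}_qg)\mathcal{N}[f]+\texttt{U}_2(\mathcal{D}_qg)\mathcal{M}[f]$ with ${\bf w}$ produces the term $\langle{\bf w},\mathcal{N}[fg]\rangle=\langle{\bf w},\phi\,\mathcal{S}_q(fg)+\psi\texttt{U}_2\,\mathcal{D}_q(fg)\rangle$, and the hypothesis \eqref{x-pearson} says nothing about this quantity; moreover the factor $\texttt{U}_2$ (degree $2$) pushes the $\mathcal{M}$-bracket back up to degree $k+1$. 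What is missing is a second, independent input: a ``companion'' Pearson-type identity expressing $\mathbf{S}_q(\phi{\bf w})-\mathbf{D}_q(\texttt{U}_2\psi{\bf w})$ as a low-degree polynomial times ${\bf w}$ (derivable from \eqref{def-fD_x-u} and \eqref{DxnSx-u}), which turns $\langle{\bf w},\mathcal{N}[P_ng]\rangle$ into $\langle{\bf w},(\mbox{low degree})\,P_ng\rangle$ and kills it by orthogonality for $\deg g$ small; only then does the induction close. Once the structure relations are actually established, your extraction of $B_n$ from the $z^n$-coefficients and of $C_{n+1}$ from the squared norms is the standard (if laborious) route and leads to \eqref{Bn-Cn-Dx}, so the remainder of the outline is believable; but as written the central lemma is unproven and the argument offered for it cannot succeed.
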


\section{Main results: first case}\label{first-case-main-results}
We are now in the position to prove our main results for one of the situation. 
\begin{lemma}\label{lemma-1-case-1}
Let $(P_n)_{n\geq 0}$ be a monic OPS such that 
\begin{align}\label{first-appell-type-case-SxDx}
\mathcal{S}_q\mathcal{D}_q P_{n}(z)=k_nP_{n-1}(z)\quad \quad (n=0,1,\ldots)\;.
\end{align}
Then the following relations hold:

\begin{align}
&2\alpha \texttt{U}_2(z)\mathcal{D}_q ^2P_n(z)=a_nP_n(z) +b_nP_{n-1}(z)+c_n P_{n-2}(z) \;,\label{From-first-appell-type-Dx2}\\
&4\alpha\texttt{U}_2(z) \mathcal{D}_q\mathcal{S}_q P_n(z)=\sum_{l=1} ^5 a_n ^{[l]}P_{n+2-l}(z)  \;,\label{From-first-appell-type-DxSx2}\\
&2\mathcal{S}_q ^2P_n(z)=2\alpha_n ^2 P_n(z) +k_n(B_n-B_{n-1})P_{n-1}(z)+(k_{n-1}C_n-k_nC_{n-1}) P_{n-2}(z) \;,\label{From-first-appell-type-Sx2}
\end{align}
for each $n=0,1,\ldots$, where 
\begin{align*}
a_n&=k_{n+1}-(2\alpha ^2-1)k_n-1\;,~b_n=\big( B_n-(2\alpha ^2-1)B_{n-1}\big)k_n\;,\\
c_n&=k_{n-1}C_n -(2\alpha ^2 -1)k_nC_{n-1},~a_n ^{[1]}=a_{n+1}- a_n \;,\quad a_n ^{[2]}=b_{n+1}-b_n\;,\\
a_n ^{[3]}&=c_{n+1}-c_n +(B_n-B_{n-1})b_n+(a_{n-1}-\alpha ^2a_n)C_n \;,\\
a_n ^{[4]}&=(B_n -B_{n-2})c_n+b_{n-1}C_n -b_nC_{n-1},\quad ~a_n ^{[5]}=c_{n-1}C_n- c_nC_{n-2} \;.
\end{align*}

\end{lemma}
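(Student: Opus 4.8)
\emph{Proof idea.} The plan is to reduce every one of \eqref{From-first-appell-type-Dx2}--\eqref{From-first-appell-type-Sx2} to the operator identities \eqref{def-Dx-fg}--\eqref{def-DxnSxf}, the three-term recurrence \eqref{TTRR_relation}, and the hypothesis \eqref{first-appell-type-case-SxDx}. First, comparing leading coefficients on the two sides of \eqref{first-appell-type-case-SxDx} and using \eqref{Dx-xnSx-xn} gives $k_n=\gamma_n\alpha_{n-1}$ for $n\ge1$, whence
\begin{align*}
k_{n+1}-k_n=\gamma_{n+1}\alpha_n-\gamma_n\alpha_{n-1}=\alpha_{2n}=2\alpha_n^2-1\;;
\end{align*}
this arithmetic fact is exactly what makes the coefficient $2\alpha_n^2$ appear in \eqref{From-first-appell-type-Sx2}. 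I will also record the auxiliary identity
\begin{align*}
\alpha\,\texttt{U}_2\,\mathcal{D}_q^2P_n=\mathcal{S}_q^2P_n-k_n\,\texttt{U}_1P_{n-1}-P_n\;,
\end{align*}
obtained by applying \eqref{def-Sx^2f} to $f=P_n$, rewriting $\mathcal{S}_q(\texttt{U}_1\mathcal{D}_qP_n)$ through \eqref{def-Sx-fg} (using $\mathcal{D}_q\texttt{U}_1=\alpha^2-1$, $\mathcal{S}_q\texttt{U}_1=\alpha\texttt{U}_1$ and $\mathcal{S}_q\mathcal{D}_qP_n=k_nP_{n-1}$), collecting the two copies of $\texttt{U}_2\mathcal{D}_q^2P_n$ and dividing by $\alpha$.

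To get \eqref{From-first-appell-type-Sx2}, write $P_{n+1}=(z-B_n)P_n-C_nP_{n-1}$ from \eqref{TTRR_relation}, apply $\mathcal{S}_q\mathcal{D}_q$, and use \eqref{first-appell-type-case-SxDx} at the indices $n+1$ and $n-1$ to arrive at $k_{n+1}P_n+C_nk_{n-1}P_{n-2}=\mathcal{S}_q\mathcal{D}_q\big((z-B_n)P_n\big)$. Expanding the right-hand side by \eqref{def-Dx-fg} (so that $\mathcal{D}_q((z-B_n)P_n)=\mathcal{S}_qP_n+(\alpha z-B_n)\mathcal{D}_qP_n$) and then by \eqref{def-Sx-fg}, and using $\mathcal{S}_q\mathcal{D}_qP_n=k_nP_{n-1}$ once more, gives
\begin{align*}
k_{n+1}P_n+C_nk_{n-1}P_{n-2}=\mathcal{S}_q^2P_n+\alpha\,\texttt{U}_2\mathcal{D}_q^2P_n+k_n(\alpha^2z-B_n)P_{n-1}\;.
\end{align*}
Now substitute the auxiliary identity: the $\texttt{U}_1$ and $\alpha^2z$ contributions combine to $k_n(z-B_n)P_{n-1}$, which is then expanded by \eqref{TTRR_relation}; this yields \eqref{From-first-appell-type-Sx2}, the coefficient of $P_n$ being $k_{n+1}-k_n+1=2\alpha_n^2$. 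Relation \eqref{From-first-appell-type-Dx2} follows immediately: multiply the auxiliary identity by $2$, insert \eqref{From-first-appell-type-Sx2} together with $\texttt{U}_1P_{n-1}=(\alpha^2-1)(P_n+B_{n-1}P_{n-1}+C_{n-1}P_{n-2})$, and collect; using once more $k_{n+1}-k_n=2\alpha_n^2-1$ the three coefficients become precisely $a_n,b_n,c_n$.

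For \eqref{From-first-appell-type-DxSx2}, take the exponent equal to $1$ in \eqref{def-DxnSxf} (here $\alpha_1=\alpha$, $\gamma_1=1$), which together with \eqref{first-appell-type-case-SxDx} gives $\mathcal{D}_q\mathcal{S}_qP_n=\alpha k_nP_{n-1}+\texttt{U}_1\mathcal{D}_q^2P_n$. Multiplying by $4\alpha\texttt{U}_2$ and using \eqref{From-first-appell-type-Dx2} to replace $4\alpha\,\texttt{U}_1\texttt{U}_2\mathcal{D}_q^2P_n$ by $2\texttt{U}_1(a_nP_n+b_nP_{n-1}+c_nP_{n-2})$ gives
\begin{align*}
4\alpha\,\texttt{U}_2\mathcal{D}_q\mathcal{S}_qP_n=4\alpha^2k_n\,\texttt{U}_2P_{n-1}+2\,\texttt{U}_1\big(a_nP_n+b_nP_{n-1}+c_nP_{n-2}\big)\;.
\end{align*}
It remains to expand $\texttt{U}_1P_m=(\alpha^2-1)(P_{m+1}+B_mP_m+C_mP_{m-1})$ and $\texttt{U}_2P_{n-1}$, the latter by reducing $z^2P_{n-1}$ through two applications of \eqref{TTRR_relation} (equivalently, by using the ``derived'' three-term relation $\mathcal{D}_q^2P_{n+1}=2\alpha k_nP_{n-1}+((2\alpha^2-1)z-B_n)\mathcal{D}_q^2P_n-C_n\mathcal{D}_q^2P_{n-1}$, obtained by applying $\mathcal{D}_q$ twice to $P_{n+1}=(z-B_n)P_n-C_nP_{n-1}$ and invoking \eqref{From-first-appell-type-Dx2} at $n\pm1$), and to collect the coefficients of $P_{n+1},\dots,P_{n-3}$; these reorganize into $a_n^{[1]},\dots,a_n^{[5]}$ upon inserting the definitions of $a_n,b_n,c_n$ and the recurrence satisfied by $k_n$.

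The hard part is purely bookkeeping: carrying the five coefficients in this last step through the repeated use of \eqref{TTRR_relation} and checking that they collapse to the stated $a_n^{[l]}$. Everything before that is a short manipulation of the identities in the Proposition together with \eqref{TTRR_relation} and the hypothesis \eqref{first-appell-type-case-SxDx}.
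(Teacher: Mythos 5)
Your overall strategy is the paper's: combine the product rules \eqref{def-Dx-fg}--\eqref{def-Sx-fg} and \eqref{def-Sx^2f} with the TTRR \eqref{TTRR_relation} and the hypothesis \eqref{first-appell-type-case-SxDx}, then read off coefficients. Your ``auxiliary identity'' is exactly the paper's relation $\mathcal{S}_q^2f=\alpha\texttt{U}_2\mathcal{D}_q^2f+\texttt{U}_1\mathcal{S}_q\mathcal{D}_qf+f$ specialized to $f=P_n$, and your displayed equation $k_{n+1}P_n+C_nk_{n-1}P_{n-2}=\mathcal{S}_q^2P_n+\alpha\texttt{U}_2\mathcal{D}_q^2P_n+k_n(\alpha^2z-B_n)P_{n-1}$ is the paper's intermediate step obtained by applying $\mathcal{S}_q\mathcal{D}_q$ to the TTRR. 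Your derivations of \eqref{From-first-appell-type-Sx2} and \eqref{From-first-appell-type-Dx2} are complete and correct (you do them in the opposite order to the paper, which is immaterial), and making explicit that $k_n=\gamma_n\alpha_{n-1}$, hence $k_{n+1}-k_n+1=2\alpha_n^2$, is a genuine improvement: the lemma's statement needs this, and the paper only records it later in the proof of Theorem \ref{main-theo-for-SxDx}.

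The soft spot is the last step for \eqref{From-first-appell-type-DxSx2}. Your single expansion $4\alpha\texttt{U}_2\mathcal{D}_q\mathcal{S}_qP_n=4\alpha^2k_n\texttt{U}_2P_{n-1}+2\texttt{U}_1\big(a_nP_n+b_nP_{n-1}+c_nP_{n-2}\big)$ is correct, but after expanding by the TTRR its coefficient of $P_n$ is $4\alpha^2(\alpha^2-1)k_n(B_n+B_{n-1})+2(\alpha^2-1)(a_nB_n+b_n)$, and identifying this with $a_n^{[2]}=b_{n+1}-b_n$ is precisely relation \eqref{min-eq3} of Lemma \ref{lemma-2-case-1} --- a genuine constraint linking $B_{n+1},B_n,B_{n-1}$, not an algebraic consequence of the definitions of $a_n,b_n,c_n$ together with the recurrence for $k_n$. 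The same happens for $a_n^{[3]},a_n^{[4]},a_n^{[5]}$ (they encode \eqref{min-eq4}, \eqref{min-eq5}, \eqref{min-eq2}, which involve the $B_n$'s and $C_n$'s); only $a_n^{[1]}$ reduces to a pure $k_n$-identity. These constraints are obtained by producing a \emph{second}, independent expansion of the same quantity --- apply $\mathcal{D}_q^2$ to the TTRR to get the paper's \eqref{for-later-use-1}, multiply by $2\alpha\texttt{U}_2$ and use \eqref{From-first-appell-type-Dx2} at $n+1,n,n-1$ --- and equating the two sets of raw coefficients; only then do they collapse to the stated $a_n^{[l]}$. So your route does not close on its own: you need the second expansion (which the paper uses as its primary one). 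To be fair, the paper's own proof is equally terse at exactly this point, but as written your closing claim that the coefficients ``reorganize upon inserting the definitions and the recurrence satisfied by $k_n$'' would fail for $l\geq 2$.
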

\begin{proof}
First of all from \eqref{def-Sx^2f} using \eqref{def-Sx-fg} yields
\begin{align}\label{def-Sx^2f-to-use}
\mathcal{S}_q ^2f=\alpha \texttt{U}_2\mathcal{D}_q ^2f +\texttt{U}_1\mathcal{S}_q\mathcal{D}_qf +f\;.
\end{align} 
Secondly, we apply successively the operators $\mathcal{D}_q$ and $\mathcal{S}_q$ to the TTRR \eqref{TTRR_relation} satisfied by the monic OPS $(P_n)_{n\geq 0}$ solution of \eqref{first-appell-type-case-SxDx}. Using \eqref{def-Dx-fg} and \eqref{def-Sx-fg}, we obtain the following equation.
\begin{align}\label{star-1}
\mathcal{S}_q ^2P_n (z)+\alpha \texttt{U}_2(z)\mathcal{D}_q ^2P_n(z)&+\alpha ^2z\mathcal{S}_q\mathcal{D}_qP_n(z)\nonumber \\
&=\mathcal{S}_q\mathcal{D}_qP_{n+1}(z)+B_n\mathcal{S}_q\mathcal{D}_qP_n(z)+C_n\mathcal{S}_q\mathcal{D}_qP_{n-1}(z)\;.
\end{align}
Finally \eqref{From-first-appell-type-Dx2} is obtained from \eqref{star-1} by using successively \eqref{def-Sx^2f-to-use}, the TTRR \eqref{TTRR_relation} and \eqref{first-appell-type-case-SxDx}.
Now from \eqref{def-Sx^2f-to-use}, we may also write \eqref{star-1} as
\begin{align*}
2\mathcal{S}_q ^2P_n(z)+z\;\mathcal{S}_q\mathcal{D}_q&P_n(z)-P_n(z)\\
&=\mathcal{S}_q\mathcal{D}_qP_{n+1}(z)+B_n\mathcal{S}_q\mathcal{D}_qP_n(z)+C_n\mathcal{S}_q\mathcal{D}_qP_{n-1}(z)\;.
\end{align*}
Equation \eqref{From-first-appell-type-Sx2} is obtained from this equation using \eqref{TTRR_relation} and \eqref{first-appell-type-case-SxDx}.
\\
Lets start again with the TTRR \eqref{TTRR_relation}. We apply the operator $\mathcal{D}_q ^2$ to it using \eqref{def-Dx-fg} and \eqref{def-Sx-fg} to obtain
\begin{align}\label{for-later-use-1}
\mathcal{D}_q\mathcal{S}_qP_n(z)+\alpha \mathcal{S}_q\mathcal{D}_q&P_n(z) +\alpha ^2z\mathcal{D}_q ^2P_n(z)\nonumber\\
&=\mathcal{D}_q ^2P_{n+1}(z)+B_n\mathcal{D}_q ^2P_{n}(z)+C_n\mathcal{D}_q ^2P_{n-1}(z)\;.
\end{align}
Then \eqref{From-first-appell-type-DxSx2} is obtained by multiplying \eqref{for-later-use-1} by $2\alpha \texttt{U}_2(z)$ using successively \eqref{first-appell-type-case-SxDx}, \eqref{From-first-appell-type-Dx2} and again the TTRR \eqref{TTRR_relation}. Hence the result follows.  
\end{proof}

\begin{lemma}\label{lemma-2-case-1}
Let $(P_n)_{n\geq 0}$ be a monic OPS satisfying \eqref{first-appell-type-case-SxDx}. The following system of difference equations holds
\begin{align}
&k_{n+2}-1/2 ~-2(2\alpha ^2 -1)(k_{n+1}-1/2) +k_n -1/2 =0\;,\label{eqS1}\\
&t_{n+2} -2(2\alpha ^2-1)t_{n+1}+t_n=0,~\quad t_n:=k_n/C_n\;, \label{eqS2}\\
&k_{n+1}B_{n+1}+\big(k_{n+1}-k_{n+2}-2(2\alpha ^2 -1)k_n \big)B_n +k_nB_{n-1}=0\;,\label{eqS3}\\
&t_{n+3}B_{n+2}-(t_{n+2}+t_{n+1})B_{n+1}+t_nB_n=0\;,\label{eqS4}\\
(t_{n+1}+t_{n+2})&(C_{n+1}-1/4)-4\alpha ^2t_n(C_n-1/4)+(t_{n-1}+t_{n-2})(C_{n-1}-1/4)\nonumber \\
&\quad \quad \quad \quad \quad \quad =t_n\left[B_n ^2 -2(2\alpha ^2 -1)B_nB_{n-1}+B_{n-1} ^2  \right]\;,\label{eqS5}
\end{align}
where $B_n$ and $C_n$ are the coefficients of the TTRR \eqref{TTRR_relation} satisfied by $(P_n)_{n\geq 0}$.
\end{lemma}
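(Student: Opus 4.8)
The strategy is to evaluate $4\alpha\,\texttt{U}_2\,\mathcal{D}_q\mathcal{S}_qP_n$ in a second way and to compare the result with \eqref{From-first-appell-type-DxSx2}. Specialising \eqref{def-DxnSxf} to $n=1$ (so that $\alpha_1=\alpha$ and $\gamma_1=1$) gives the operator identity $\mathcal{D}_q\mathcal{S}_qf=\alpha\,\mathcal{S}_q\mathcal{D}_qf+\texttt{U}_1\mathcal{D}_q^2f$. Applying it to $f=P_n$, invoking the hypothesis \eqref{first-appell-type-case-SxDx}, multiplying by $4\alpha\,\texttt{U}_2$, and rewriting $2\alpha\,\texttt{U}_2\,\mathcal{D}_q^2P_n$ by means of \eqref{From-first-appell-type-Dx2}, one obtains
\begin{align*}
4\alpha\,\texttt{U}_2(z)\,\mathcal{D}_q\mathcal{S}_qP_n(z)=4\alpha^2k_n\,\texttt{U}_2(z)P_{n-1}(z)+2\,\texttt{U}_1(z)\bigl(a_nP_n(z)+b_nP_{n-1}(z)+c_nP_{n-2}(z)\bigr)\;.
\end{align*}
Since $\texttt{U}_1(z)=(\alpha^2-1)z$ and $\texttt{U}_2(z)=(\alpha^2-1)(z^2-1)$, I would then expand $(z^2-1)P_{n-1}$ and $z\bigl(a_nP_n+b_nP_{n-1}+c_nP_{n-2}\bigr)$ in the basis $(P_j)_{j\ge0}$ by iterating the three-term recurrence \eqref{TTRR_relation}, producing a linear combination of $P_{n+1},P_n,P_{n-1},P_{n-2},P_{n-3}$ whose coefficients are explicit rational functions of $k_m,B_m,C_m$.

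Since $(P_j)_{j\ge0}$ is a basis of $\mathcal{P}$, comparing this expansion with \eqref{From-first-appell-type-DxSx2} forces the five coefficients to coincide. Substituting the definitions of $a_n,b_n,c_n$ and $a_n^{[l]}$ from Lemma \ref{lemma-1-case-1}: the coefficient of $P_{n+1}$ reduces, by elementary algebra, to \eqref{eqS1}; the coefficient of $P_{n-3}$, after division by $C_nC_{n-1}C_{n-2}$ and passage to $t_m=k_m/C_m$, reduces to \eqref{eqS2}; the coefficient of $P_n$ reduces to \eqref{eqS3} once \eqref{eqS1} is used to eliminate $k_{n+2}$; the coefficient of $P_{n-2}$, after division by $C_nC_{n-1}$ and use of \eqref{eqS2}, reduces to \eqref{eqS4}; and the coefficient of $P_{n-1}$ --- whose $B$-quadratic part is exactly $k_n\bigl(B_n^2-2(2\alpha^2-1)B_nB_{n-1}+B_{n-1}^2\bigr)$ --- reduces, after dividing by $C_n$ and disposing of the remaining terms with the help of \eqref{eqS1}--\eqref{eqS4}, to \eqref{eqS5}.

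Deriving the second expansion and matching the coefficients are direct computations using the operational rules \eqref{def-Dx-fg}--\eqref{DxnSx-u}, the hypothesis \eqref{first-appell-type-case-SxDx}, and Lemma \ref{lemma-1-case-1}; the only genuinely laborious step I anticipate is the last one, reducing the $P_{n-1}$-coefficient to \eqref{eqS5}, since it entangles $B_{n-1}^2$, three consecutive $C_m$'s and $k_m$'s, and the nested definitions of $a_m,b_m,c_m$, and it can be brought to the stated shape only after \eqref{eqS1}--\eqref{eqS4} are in place. No input beyond the three-term recurrence and Lemma \ref{lemma-1-case-1} is needed.
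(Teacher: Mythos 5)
Your proposal is correct and is essentially the paper's own argument in a slightly repackaged form: the paper multiplies the relation obtained from applying $\mathcal{D}_q^2$ to the TTRR (after substituting $\mathcal{D}_q\mathcal{S}_q=\alpha\mathcal{S}_q\mathcal{D}_q+\texttt{U}_1\mathcal{D}_q^2$) by $2\alpha\texttt{U}_2$, while you equate \eqref{From-first-appell-type-DxSx2} with the expansion of $4\alpha\texttt{U}_2\mathcal{D}_q\mathcal{S}_qP_n=4\alpha^2k_n\texttt{U}_2P_{n-1}+2\texttt{U}_1(a_nP_n+b_nP_{n-1}+c_nP_{n-2})$ — the resulting vanishing linear combination of $P_{n+1},\ldots,P_{n-3}$ and the five coefficient identities are the same in both cases. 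The subsequent reduction to \eqref{eqS1}--\eqref{eqS5} using the expressions for $a_n,b_n,c_n$ and the earlier equations matches the paper's route as well.
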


\begin{proof}
Consider the TTRR \eqref{TTRR_relation} satisfied by monic OPS $(P_n)_{n\geq 0}$ solution of \eqref{first-appell-type-case-SxDx}. Then from \eqref{for-later-use-1} using \eqref{def-DxnSxf} for $n=1$ and $f=P_n$ therein, we obtain
\begin{align}\label{for-last-use-1}
2\alpha \mathcal{S}_q\mathcal{D}_qP_n(z) +(2\alpha ^2-1)z\mathcal{D}_q ^2P_n(z)
=\mathcal{D}_q ^2P_{n+1}(z)+B_n\mathcal{D}_q ^2P_{n}(z)+C_n\mathcal{D}_q ^2P_{n-1}(z)\;.
\end{align}
We now multiply \eqref{for-last-use-1} by $2\alpha \texttt{U}_2$ using successively \eqref{first-appell-type-case-SxDx}, \eqref{From-first-appell-type-Dx2} and the TTRR \eqref{TTRR_relation} to obtain a vanishing linear combination of $P_{n+1}$, $P_n$, $P_{n-1}$, $P_{n-2}$ and $P_{n-3}$, for each $n=0,1,\ldots$. Since $(P_n)_{n\geq 0}$ is a polynomial base in $\mathcal{P}$, then all coefficients of the mentioned linear combination must be zero. Therefore we obtain the following equations
\begin{align}
&a_{n+1}-(2\alpha ^2-1)a_n=4\alpha ^2(\alpha ^2 -1)k_n\;,\label{min-eq1}\\
&c_{n-1}C_n -(2\alpha ^2-1)c_nC_{n-2}=4\alpha ^2(\alpha ^2-1)k_nC_{n-1}C_{n-2}\;,\label{min-eq2}\\
&b_{n+1}-(2\alpha ^2 -1)b_n -2(\alpha ^2-1)a_nB_n=4\alpha ^2(\alpha ^2-1)(B_n+B_{n-1})k_n\;,\label{min-eq3}\\
&c_{n+1}-(2\alpha ^2-1)c_n +\big(a_{n-1}-(2\alpha ^2-1)a_n \big)C_n+\big(B_n-(2\alpha ^2-1)B_{n-1} \big)b_n\nonumber \\
&\quad \quad \quad \quad\quad \quad \quad \quad \quad\quad \quad \quad\quad=4\alpha ^2(\alpha ^2-1)k_n(C_n+B_{n-1} ^2+C_{n-1}-1) \;,\label{min-eq4}\\
&b_{n-1}C_n +\big(B_n-(2\alpha ^2-1)B_{n-2} \big)c_n -(2\alpha ^2-1)b_nC_{n-1}\nonumber\\
&\quad\quad \quad\quad \quad \quad \quad\quad \quad \quad \quad\quad \quad \quad  =4\alpha ^2(\alpha ^2-1)k_nC_{n-1}(B_{n-1}+B_{n-2}) \;.\label{min-eq5}
\end{align}
Equations \eqref{eqS1} and \eqref{eqS2} follow from \eqref{min-eq1} and \eqref{min-eq2}, respectively using notations and expressions of $a_n$, $b_n$ and $c_n$ obtained in the previous lemma. Similarly, \eqref{eqS3}--\eqref{eqS5} are obtained from \eqref{min-eq3}--\eqref{min-eq5} using \eqref{eqS1} and \eqref{eqS2}.
\end{proof}

\begin{theorem}\label{main-theo-for-SxDx}
The only monic OPS, $(P_n)_{n\geq 0}$, for which  
\begin{align}
\mathcal{S}_q\mathcal{D}_qP_{n}(z)= k_nP_{n-1}(z) \quad \quad (n=0,1,\ldots)\;,\label{appell-case-SxDx-solved}
\end{align}
is the Al-Salam-Chihara polynomial with parameters $a$ and $b$ such that $(a,b)\in \left\lbrace (1,-1),~(-1,1)  \right\rbrace$.
\end{theorem}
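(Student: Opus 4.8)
The plan is to extract from the difference system in Lemma~\ref{lemma-2-case-1} explicit formulas for the recurrence coefficients $B_n$ and $C_n$, and then to identify the resulting OPS with the Al-Salam-Chihara polynomials with $(a,b)=(1,-1)$ or $(-1,1)$. First I would solve the scalar linear recurrences \eqref{eqS1} and \eqref{eqS2}. Both have the same characteristic equation $\lambda^2-2(2\alpha^2-1)\lambda+1=0$; since $2\alpha^2-1=(q^{1/2}+q^{-1/2})^2/2-1=(q+q^{-1})/2$, the roots are $\lambda=q$ and $\lambda=q^{-1}$. Hence $k_n-1/2=Aq^n+A'q^{-n}$ for constants $A,A'$, and since $k_0=0$ (as $\mathcal{S}_q\mathcal{D}_qP_0=0$) together with the normalization forced by the leading coefficients — from \eqref{Dx-xnSx-xn} one reads off $k_n=\alpha_n\gamma_n$, i.e. $k_n=\frac14(q^{n/2}+q^{-n/2})(q^{n/2}-q^{-n/2})/(q^{1/2}-q^{-1/2})\cdot 2$, so in fact $k_n=\frac{q^n-q^{-n}}{2(q^{1/2}-q^{-1/2})}\cdot\frac{q^{n/2}+q^{-n/2}}{q^{n/2}+q^{-n/2}}$ — one pins down $A,A'$ and obtains a closed form for $k_n$, hence for $t_n=k_n/C_n$.

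Next I would handle $B_n$. Equation \eqref{eqS3} is a linear three-term recurrence for $(B_n)$ whose coefficients are now known explicitly in terms of $q$; I would show its only polynomially-bounded (indeed, the only admissible) solution is $B_n=0$ for all $n$. A clean way is to combine \eqref{eqS3} with \eqref{eqS4}: \eqref{eqS4} gives $t_{n+3}B_{n+2}-(t_{n+2}+t_{n+1})B_{n+1}+t_nB_n=0$, and dividing \eqref{eqS3} by a suitable factor and comparing the two recurrences (which must be simultaneously satisfied) forces $B_n\equiv 0$ unless a resonance occurs; checking the resonance condition against the explicit values of $k_n,t_n$ rules it out. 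With $B_n=0$, equation \eqref{eqS5} collapses to
\begin{align*}
(t_{n+1}+t_{n+2})(C_{n+1}-1/4)-4\alpha^2 t_n(C_n-1/4)+(t_{n-1}+t_{n-2})(C_{n-1}-1/4)=0\;,
\end{align*}
a linear three-term recurrence for $(C_n-1/4)$ with known coefficients, which I would solve explicitly; matching the initial data ($C_1$ is determined by $k_1$ and the normalization, and $C_n\to 1/4$ is forced by regularity) should yield $C_n=\frac14(1-q^n)(1+q^{n-1})$ — precisely the Al-Salam-Chihara value with $ab=-1$ and $a+b=0$, i.e. $\{a,b\}=\{1,-1\}$.

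The last step is bookkeeping: verify that with $B_n=0$ and this $C_n$ the TTRR \eqref{TTRR_relation} is exactly the recurrence displayed before the theorem for $Q_n(x;1,-1|q)=Q_n(x;-1,1|q)$, and conversely check (a short computation with \eqref{def-Dx-fg}, \eqref{def-Sx-fg}) that the Al-Salam-Chihara polynomials with these parameters do satisfy \eqref{appell-case-SxDx-solved}, so the class is nonempty and the characterization is sharp. I expect the main obstacle to be the rigorous exclusion of spurious solutions of the $B_n$-recurrence \eqref{eqS3}: a priori \eqref{eqS3} alone has a two-parameter family of solutions, and one must use the compatibility with \eqref{eqS4} (and possibly positivity/regularity $C_n\neq 0$) to force $B_n\equiv 0$; getting the resonance analysis right, i.e. showing the second solution of \eqref{eqS3} is incompatible with \eqref{eqS4} for the actual $t_n$, is the delicate point. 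Everything else is solving constant-"ish"-coefficient linear recurrences and matching initial conditions, which is routine once $k_n$ is known.
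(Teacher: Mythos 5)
Your overall strategy (solve the difference system of Lemma~\ref{lemma-2-case-1}, show $B_n\equiv 0$, then determine $C_n$) is the same as the paper's, but you leave open precisely the steps that carry the weight of the argument, and the paper closes them with a tool you never invoke: the expansion $P_n(z)=z^n+f_nz^{n-1}+g_nz^{n-2}+\cdots$. Matching the three top coefficients in \eqref{appell-case-SxDx-solved} via \eqref{Dx-xnSx-xn} gives not only $k_n=\gamma_n\alpha_{n-1}$ (note: \emph{not} $\alpha_n\gamma_n$ as you wrote --- the $\mathcal{S}_q$ acts on a polynomial of degree $n-1$, so the factor is $\alpha_{n-1}$; your value is not even consistent with \eqref{eqS1}, since $\alpha_n\gamma_n-1/2$ is not of the form $Aq^n+A'q^{-n}$), but also the two relations $k_{n-1}f_n=k_nf_{n-1}$ and a recursion for $g_n$. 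The first yields $B_n=(k_{n+1}-k_n)B_0$, reducing the a priori two-parameter solution set of \eqref{eqS3} to a one-parameter family; then multiplying \eqref{eqS4} by $q^n$ and letting $n\to\infty$ kills $B_0$. Your alternative --- deducing $B_n\equiv0$ from the joint compatibility of \eqref{eqS3} and \eqref{eqS4} alone via a ``resonance analysis'' --- is exactly the step you flag as delicate, and you do not carry it out; as written this is a gap, not a proof.

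The $C_n$ step has a similar problem. You propose to read \eqref{eqS5} (with $B_n=0$) as a linear three-term recurrence for $C_n-1/4$ ``with known coefficients,'' but the coefficients $t_n=k_n/C_n$ involve the unknowns. The correct order of operations is the paper's: solve \eqref{eqS2} to get $t_n=r_2q^{-n}(1-rq^{2n})$, whence $C_n=k_n/t_n$; obtain $\lim_n C_n=1/4$ by passing to the limit in \eqref{eqS5} (this is \emph{not} ``forced by regularity''; regularity only gives $C_n\neq0$), which fixes $r_2$ and leaves $C_n=\tfrac14(1-q^n)(1+q^{n-1})/(1-rq^{2n})$ with one residual parameter $r$; and finally eliminate $r$ using both \eqref{eqS5} and the extra relation \eqref{initial-conditions-first-appell-SxDx-02} coming from the $g_n$-recursion. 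Your ``matching initial data'' does not explain where the initial data $C_1,C_2$ (equivalently $r$) are pinned down, and nothing in the system \eqref{eqS1}--\eqref{eqS5} obviously does so without the leading-coefficient input. The final identification with $Q_n(x;1,-1|q)$ and the converse verification are fine, but the proof as proposed is incomplete at both decisive points.
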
  

\begin{proof}
Let $(P_n)_{n\geq 0}$ be a monic OPS solution of  %with respect to the functional ${\bf u} \in \mathcal{P}^*$ and satisfying 
\eqref{appell-case-SxDx-solved}. Before solving the system of equations \eqref{eqS1}--\eqref{eqS5}, let us find some initial conditions.
We claim that the coefficients $B_n$ and $C_n$ of the TTRR \eqref{TTRR_relation} satisfied by $(P_n)_{n\geq 0}$ are given by
\begin{align}
&B_{n-1}=0,\quad k_{n-1}=\gamma_{n-1}\alpha_{n-2}\;,\label{initial-conditions-first-appell-SxDx-01}\\
&C_{n+1}-1/4 = \frac{k_n-k_{n+2}+\gamma_{n+2}\alpha_{n-1}}{4k_n} +\frac{k_{n+2}-k_n}{k_n}\sum_{l=1} ^n (C_l-1/4)\;,\label{initial-conditions-first-appell-SxDx-02}
\end{align}
for each $n=1,2,\ldots$.
Indeed, it is known that 
$P_n(z)=z^n +f_nz^{n-1}+g_nz^{n-2}+\cdots$, where $f_0=g_1=0$, $B_n=f_n-f_{n+1}$ and $C_n=g_n-g_{n+1}-f_nB_n$. With this we identify the three first coefficients of term with higher degrees in \eqref{appell-case-SxDx-solved} using \eqref{Dx-xnSx-xn} to obtain
$k_n=\gamma_n\alpha_{n-1}$ together with 
\begin{align}\label{expr-fn-gn}
k_{n-1}f_n=k_nf_{n-1},~\quad k_ng_{n-1}=k_{n-2}g_n +\gamma_n\widehat{u}_{n-1}+\alpha_{n-3}u_n\;.
\end{align} 
From the first equation in \eqref{expr-fn-gn} we obtain $$B_n=(k_{n+1}-k_n)B_0\;,$$
which also satisfies \eqref{eqS3}. In addition, assume without loss of generality that $0<q<1$. Then $$\lim_{n\rightarrow \infty }q^nB_n=B_0/2 \;.$$ It is not hard to see that $q$ and $q^{-1}$ are solutions of the characteristic equation associated to \eqref{eqS2}. Hence solutions of the mentioned equation are given by $$t_n=r_1q^n +r_2q^{-n}~\quad (n=1,2,\ldots)\;,$$ with $r_1$ and $r_2$ two complex numbers such that $|r_1|+|r_2|\neq 0$.
Assume for instance that $r_2\neq 0$. Then we write $t_n=r_2q^{-n}(1-rq^{2n})$, where $r=-r_1/r_2$. We multiply \eqref{eqS4} by $q^n$ and take the limit as $n$ tends to $\infty$ to obtain $B_0=0$ and therefore $B_n=0$, for all $n=0,1,\ldots$. So \eqref{initial-conditions-first-appell-SxDx-01} holds and the second equation in \eqref{initial-conditions-first-appell-SxDx-02} is then obtain directly from the second equation in \eqref{expr-fn-gn}.

From the definition of $t_n$ given in \eqref{eqS2}, we obtain $C_n=k_n/t_n$ and so we deduce that $\lim_{n\rightarrow \infty} C_n =1/(2r_2(q^{-1}-1))$. But taking the limit in \eqref{eqS5} as $n$ tends to $\infty$ taking into account \eqref{initial-conditions-first-appell-SxDx-01}, we obtain $\lim_{n\rightarrow \infty}C_n=1/4$. This means we can write
\begin{align*}
C_{n}=\frac{(1-q^n)(1+q^{n-1})}{4(1-rq^{2n})}\;\quad (n=1,2,\ldots)\;.
\end{align*}
It is not hard to see that this satisfy \eqref{initial-conditions-first-appell-SxDx-02} and \eqref{eqS5} if and only if $r=0$ and therefore $C_n=(1-q^n)(1+q^{n-1})/4$. For the case $1<q<+\infty$. We proceed similarly to obtain $C_n=(1-q^{-n})(1+q^{-n+1})/4$.
Hence solutions of \eqref{appell-case-SxDx-solved} are given by $B_n=0$ with 
$$C_{n+1}=\mbox{$\frac14$}(1-q^{n+1})(1+q^n)\quad\textit{or}\quad C_{n+1}=\mbox{$\frac14$}(1-q^{-n-1})(1+q^{-n})\;,$$ for all $n=0,1,\ldots$. Thus $$P_n=Q_n(z;s,-s|q) \quad\textit{or}\quad P_n=Q_n(z;s,-s|1/q),\quad s=\pm 1\;.$$
\end{proof}
We now characterize functionals whose corresponding OPS are solutions of \eqref{appell-case-SxDx-solved}.

\begin{theorem}\label{main-result-3}
Let ${\bf u}\in \mathcal{P}^*$ be a regular functional and $(P_n)_{n\geq 0}$ the corresponding monic OPS. Then $(P_n)_{n\geq 0}$ is the solution of \eqref{appell-case-SxDx-solved} if and only if ${\bf u}$ is a solution of the following functional equations
\begin{align}
(q^s-1){\bf D}_q{\bf S}_q{\bf u}&=2z{\bf u}\;,\label{funct-eq1}\\
2q^{3s/2}{\bf D}_q ^2 \big(\texttt{U}_2{\bf u} \big)&=-(2z^2+q^s-1){\bf u}\;,\label{funct-eq2}\\
2q^s{\bf S}_q ^2{\bf u}&=\big(-2z^2 +1+q^s  \big){\bf u}\;,\label{funct-eq3}\\
8q^{5s/2}{\bf S}_q {\bf D}_q\big( \texttt{U}_2 {\bf u} \big) &=(1-q^s)z(-4z^2+q^{2s}+3){\bf u}\;,\label{funct-eq4}
\end{align} 
with $s=\pm 1$.
\end{theorem}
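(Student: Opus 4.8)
\noindent\emph{Proof proposal.} This is an equivalence, and the decisive input is Theorem~\ref{main-theo-for-SxDx}: the only admissible monic OPS is already identified and is, for a sign $s\in\{1,-1\}$, the Al-Salam--Chihara sequence with $B_n=0$, $C_{n+1}=\tfrac14(1-q^{s(n+1)})(1+q^{sn})$ and $k_n=\gamma_n\alpha_{n-1}$. The plan is to prove the two implications separately, normalizing $\langle{\bf u},1\rangle=1$ (legitimate, as \eqref{funct-eq1}--\eqref{funct-eq4} are homogeneous in ${\bf u}$), and to use throughout the weak expansion ${\bf w}=\sum_{n\ge0}\langle{\bf w},P_n\rangle{\bf e}_n$ with ${\bf e}_n=\langle{\bf u},P_n^2\rangle^{-1}P_n{\bf u}$: an identity ${\bf w}=\varphi{\bf u}$ with $\varphi\in\mathcal{P}$ is equivalent to the equalities $\langle{\bf w},P_m\rangle=\langle{\bf u},\varphi P_m\rangle$ for every $m$, and in each of our four cases the left-hand side is nonzero for only a few small $m$.

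\textbf{Necessity.} Assume $(P_n)_{n\ge0}$ satisfies \eqref{appell-case-SxDx-solved} and fix the corresponding $s$; then $P_1=z$, $P_2=z^2-C_1$, $P_3=z^3-(C_1+C_2)z$, $k_1=1$, $C_1=\tfrac12(1-q^s)$, and the coefficients of Lemma~\ref{lemma-1-case-1} collapse ($b_n=0$, $a_0=a_1=0$). Each functional equation I would obtain by transposing the operator onto ${\bf u}$ and substituting the relevant relation. For \eqref{funct-eq1}: $\langle{\bf D}_q{\bf S}_q{\bf u},P_m\rangle=-\langle{\bf u},\mathcal{S}_q\mathcal{D}_qP_m\rangle=-k_m\langle{\bf u},P_{m-1}\rangle$, nonzero only at $m=1$ (value $-1$), which matches $\langle 2z{\bf u},P_m\rangle=2\langle{\bf u},P_{m+1}+B_mP_m+C_mP_{m-1}\rangle$, nonzero only at $m=1$ with value $2C_1=1-q^s$. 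For \eqref{funct-eq3}: by \eqref{From-first-appell-type-Sx2} with $B_n=0$, $\langle{\bf S}_q^2{\bf u},P_m\rangle=\langle{\bf u},\mathcal{S}_q^2P_m\rangle$ is nonzero only for $m\in\{0,2\}$, equal there to $1$ and to $\tfrac12(k_1C_2-k_2C_1)$; a short computation with the explicit $C_n,k_n$ verifies that these are the pairings of $\tfrac1{2q^s}(1+q^s-2z^2){\bf u}$. For \eqref{funct-eq2} and \eqref{funct-eq4}: transpose through $\langle{\bf D}_q^2(\texttt{U}_2{\bf u}),P_m\rangle=\langle{\bf u},\texttt{U}_2\mathcal{D}_q^2P_m\rangle$ and $\langle{\bf S}_q{\bf D}_q(\texttt{U}_2{\bf u}),P_m\rangle=-\langle{\bf u},\texttt{U}_2\mathcal{D}_q\mathcal{S}_qP_m\rangle$, and use \eqref{From-first-appell-type-Dx2} and \eqref{From-first-appell-type-DxSx2} to see that these pairings are nonzero only for $m=2$ and for $m\in\{1,3\}$, respectively; hence ${\bf D}_q^2(\texttt{U}_2{\bf u})$ is a quadratic and ${\bf S}_q{\bf D}_q(\texttt{U}_2{\bf u})$ an odd cubic times ${\bf u}$, and evaluating the surviving pairings (equivalently, $\langle{\bf u},\texttt{U}_2\mathcal{D}_q^2P_2\rangle$ and $\langle{\bf u},\texttt{U}_2\mathcal{D}_q\mathcal{S}_qP_m\rangle$ for $m=1,3$, via \eqref{Dx-xnSx-xn}) gives the displayed polynomials and the normalizing factors $q^{3s/2}$, $q^{5s/2}$.

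\textbf{Sufficiency.} Conversely, suppose ${\bf u}$ is regular and satisfies \eqref{funct-eq1}--\eqref{funct-eq4} for the relevant $s$; I claim already \eqref{funct-eq1} does it. Tested against an arbitrary $f\in\mathcal{P}$ it reads $\langle{\bf u},\mathcal{S}_q\mathcal{D}_qf\rangle=\tfrac{2}{1-q^s}\langle{\bf u},zf\rangle$. Taking $f=z^n$ and using \eqref{Dx-xnSx-xn} (so $\mathcal{S}_q\mathcal{D}_qz^n$ has leading term $\gamma_n\alpha_{n-1}z^{n-1}$, the remainder of lower degree and the same parity) turns this into a recursion expressing $\mu_{n+1}:=\langle{\bf u},z^{n+1}\rangle$ through $\mu_{n-1},\mu_{n-3},\dots$; for $n=0$ it forces $\mu_1=0$, so all odd moments vanish, and all even moments are determined by $\mu_0$. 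By the necessity part the Al-Salam--Chihara functional of Theorem~\ref{main-theo-for-SxDx} (for the same $s$) satisfies \eqref{funct-eq1}, hence has exactly these moments; since ${\bf u}$ is regular, $\mu_0\ne0$, so ${\bf u}$ is a nonzero multiple of that functional. Therefore the monic OPS of ${\bf u}$ is the corresponding Al-Salam--Chihara sequence, which by Theorem~\ref{main-theo-for-SxDx} solves \eqref{appell-case-SxDx-solved}.

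\textbf{Expected main obstacle.} Conceptually the proof is short; the effort lies in the necessity part, in two places. First, one must verify carefully that each of the four transposed functionals pairs to zero against $P_m$ outside the indicated small set of indices --- this is exactly where $B_n=0$ and the explicit forms of $a_n,b_n,c_n,a_n^{[l]}$ from Lemma~\ref{lemma-1-case-1} are used, together with parity. Second, the half-integer powers of $q$ must be tracked so that the constants $q^{3s/2}$ and $q^{5s/2}$ come out precisely as stated. No separate treatment of $0<q<1$ and $1<q<\infty$ is needed, since $\alpha_n$ and $\gamma_n$ are invariant under $q\mapsto q^{-1}$ while $C_{n+1}$ is not --- which is just what the sign $s$ records.
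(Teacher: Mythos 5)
Your proposal is correct, and its two halves relate to the paper differently. The necessity part is essentially the paper's own argument in a slightly different dressing: the paper transposes the relations of Lemma \ref{lemma-1-case-1} into identities for the dual basis (e.g. ${\bf D}_q{\bf S}_q{\bf a}_n=-k_{n+1}{\bf a}_{n+1}$ and $2\alpha{\bf D}_q^2(\texttt{U}_2{\bf a}_n)=a_n{\bf a}_n+b_{n+1}{\bf a}_{n+1}+c_{n+2}{\bf a}_{n+2}$) and then sets $n=0$, using \eqref{expression-an}, $B_n=0$ and $C_1=\tfrac12(1-q^s)$; your pairing of the four transposed functionals against $P_m$ is the same computation index by index, and your vanishing claims check out ($a_0=0$, $b_n=0$ because $B_n=0$, parity kills the odd pairings). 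The sufficiency part is where you genuinely diverge: the paper uses \eqref{funct-eq1} together with \eqref{funct-eq3}, the operator identities \eqref{DxnSx-u} and \eqref{def-fD_x-u}, to produce the Pearson-type equation $(q^{1/2}-q^{-1/2}){\bf D}_q((z^2-1){\bf u})=-2s\,{\bf S}_q(z{\bf u})$, and then reads off $B_n=0$, $C_{n+1}=\tfrac14(1-q^{s(n+1)})(1+q^{sn})$ from Theorem \ref{main-Thm1}; you instead test \eqref{funct-eq1} alone against $z^n$ and argue by moment recursion plus uniqueness that ${\bf u}$ is a multiple of the Al-Salam--Chihara functional. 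Your route is more elementary (no need for \eqref{funct-eq3}, \eqref{DxnSx-u}, \eqref{def-fD_x-u} or Theorem \ref{main-Thm1}) and shows a single equation already pins ${\bf u}$ down; the paper's route buys the explicit distributional Pearson equation, i.e. structural (classical-character) information and the recurrence coefficients directly. One point to make explicit in your write-up: when you say the Al-Salam--Chihara functional "satisfies \eqref{funct-eq1} by the necessity part", you are using that $Q_n(\cdot;s,-s|q^s)$ does satisfy \eqref{appell-case-SxDx-solved}, i.e. the existence half of the characterization in Theorem \ref{main-theo-for-SxDx}; the paper's own proof relies on exactly the same fact in its final sentence, so this is not an extra gap, but it should be cited rather than left implicit.
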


\begin{remark}
We emphasize the following. At this stage we know that monic OPS solutions of \eqref{appell-case-SxDx-solved} are special cases the Al-Salam-Chihara polynomials and so they are classical OPS. Then, there exit (see \cite{FK-NM2011}) two polynomials $\phi$ and $\psi$, of degree at most two and one, respectively such that $${\bf D}_q(\phi {\bf u})={\bf S}_q(\psi {\bf u})\;.$$
However, from the above functional equation it is not possible to deduce \eqref{funct-eq1}--\eqref{funct-eq4}. Nevertheless, using Lemma \ref{lemma-1-case-1} the result can be proved as follows.
\end{remark}

\begin{proof}
Assume first that $(P_n)_{n\geq 0}$ is the monic OPS solution of \eqref{appell-case-SxDx-solved}. Let $({\bf a}_n)_{n\geq 0}$ be the dual basis associated to the sequence of simple set $(P_n)_{n\geq 0}$. Using \eqref{appell-case-SxDx-solved}, the following holds
$$\left\langle {\bf D}_q{\bf S}_q{\bf a}_n, P_l \right\rangle =-\left\langle {\bf a}_n, \mathcal{D}_q \mathcal{S}_q P_l \right\rangle=-k_l\left\langle {\bf a}_n, P_{l-1}\right\rangle=-k_{n+1}\delta_{n+1,l}\;.$$
Therefore
\begin{align}
{\bf D}_q{\bf S}_q{\bf a}_n=-k_{n+1}{\bf a}_{n+1}\;\quad~(n=0,1,\ldots)\;,\label{funct-eq1-for-an}
\end{align}
is obtained by writing $${\bf D}_q{\bf S}_q{\bf a}_n =\sum_{l=0} ^{+\infty} \left\langle {\bf D}_q{\bf S_q}{\bf a}_n, P_l \right\rangle{\bf a}_l\;, $$ taking into account what is preceding. Equation \eqref{funct-eq1} follows by taking $n=0$ in \eqref{funct-eq1-for-an} using \eqref{expression-an}, \eqref{TTRR_coefficients} and the fact that $B_0=0$ and $C_1=(1-q^s)/2$ with $s=\pm 1$ (obtained from Theorem \ref{main-theo-for-SxDx}). Similarly, using \eqref{From-first-appell-type-Dx2} on can prove that 
\begin{align*}
2\alpha {\bf D}_q ^2(\texttt{U}_2{\bf a}_n)= a_n{\bf a}_n +b_{n+1}{\bf a}_{n+1}+c_{n+2}{\bf a}_{n+2}\; \quad \quad (n=0,1,\ldots)\;.
\end{align*}
Therefore \eqref{funct-eq2} follows by taking $n=0$ in the above equation taking into account \eqref{TTRR_relation}--\eqref{TTRR_coefficients}, \eqref{expression-an} and Theorem \ref{main-theo-for-SxDx}. Equation \eqref{funct-eq3} (respectively \eqref{funct-eq4}) follows from the same idea using \eqref{From-first-appell-type-Sx2} (respectively \eqref{From-first-appell-type-DxSx2}). 
\\
Assume secondly that $(P_n)_{n\geq 0}$ is a monic OPS with respect to the functional ${\bf u}$, solution of equations \eqref{funct-eq1}--\eqref{funct-eq4}. We are only going to use \eqref{funct-eq1} and \eqref{funct-eq3}. We first apply the operator ${\bf S}_q$ on \eqref{funct-eq1} using successively \eqref{DxnSx-u} (for $n=1$ and ${\bf u}$ replaced by ${\bf S}_q{\bf u}$), \eqref{funct-eq3} and \eqref{funct-eq1} to obtain
\begin{align}\label{relation-for-converse}
{\bf S}_q (z{\bf u})&=\frac{1}{2}(q^s-1){\bf S}_q{\bf D}_q\big({\bf S}_q{\bf u}\big)=\frac{1}{2}(q^s-1)\Big(\frac{\alpha}{2\alpha ^2-1} {\bf D}_q{\bf S}_q ^2{\bf u}-\frac{1}{2\alpha ^2-1}\texttt{U}_1{\bf D}_q ^2{}{\bf S}_q{\bf u}\Big)\nonumber\\
&=\frac{\alpha (q^s-1)}{2(2\alpha ^2-1)}{\bf D}_q\Big(\big(-q^{-s}z^2+\mbox{$\frac12$}(1+q^{-s}) \big){\bf u} \Big)-\frac{\texttt{U}_1}{2\alpha ^2-1}{\bf D}_q\big(z{\bf u}  \big)\;.
\end{align}
In the meantime, using \eqref{def-fD_x-u} one may write $$\texttt{U}_1{\bf D}_q\big(z{\bf u} \big)=\alpha {\bf D}_q \big(z\texttt{U}_1{\bf u} \big)-(\alpha ^2-1){\bf S}_q ^2\big( z{\bf u}\big)  \;.$$
We replace this in \eqref{relation-for-converse} in order to obtain
$$\big(q^{1/2}-q^{-1/2}\big){\bf D}_q\big((z^2-1){\bf u} \big)=-2s{\bf S}_q\big(z{\bf u}\big)\;.$$
This means that ${\bf u}$ satisfies \eqref{x-pearson} with $\phi(z)=\mbox{$\frac s2$} (q^{1/2}-q^{-1/2})(z^2-1)$ and $\psi(z)=z$. Therefore applying \eqref{Bn-Cn-Dx}, we obtain
$$B_n =0,\quad C_{n+1}=\frac{(1-q^{s(n+1)})(1+q^{sn})}{4}\quad (n=0,1,\ldots)\;,$$
and so $P_n =Q_n(x;s,-s|q^s)$ for $n=0,1,\ldots$. We then use Theorem \ref{main-theo-for-SxDx} to conclude that $(P_n)_{n\geq 0}$ satisfies \eqref{appell-case-SxDx-solved}. This conclusion can be obtained similarly using \eqref{funct-eq2} and \eqref{funct-eq4}.
\end{proof}

\section{Main results: second case}\label{second-case-main-results}
In this section we are interested in monic OPS, $(P_n)_{n\geq 0}$, solution of the following equation
\begin{align}\label{second-appell-type-case-DxSx}
\mathcal{D}_q\mathcal{S}_q P_{n}(z)=r_nP_{n-1}(z)\quad \quad (n=0,1,\ldots)\;.
\end{align}
Methods and techniques are similar to ones used in the previous section. For this reason, we mention some of results without proves.   

\begin{lemma}\label{lemma-1-case-2}
Let $(P_n)_{n\geq 0}$ be a monic OPS such that \eqref{second-appell-type-case-DxSx} holds. 
Then the following relations hold:
\begin{align}
2(\alpha ^2-1)(z^2-\alpha ^2) \mathcal{D}_q ^2P_n(z)=a_nP_n(z) +b_nP_{n-1}(z)+c_n P_{n-2}(z) \;,\label{From-second-appell-type-Dx2}\\
2\alpha \mathcal{S}_q ^2P_n(z)=c_n ^{[1]}P_n(z) +c_n ^{[2]}P_{n-1}(z)+c_n ^{[3]}P_{n-2}(z)\;,\label{From-second-appell-type-Sx2}\\
4\alpha (\alpha ^2-1)(z^2-\alpha ^2) \mathcal{S}_q\mathcal{D}_q P_n(z)\quad\quad\quad\quad\quad \quad\quad\quad\quad\quad\quad\quad\quad\quad\quad\quad\quad\quad\quad\quad\nonumber\\
 =b_n ^{[1]}P_{n+1}(z) +b_n ^{[2]}P_{n}(z)+b_n ^{[3]} P_{n-1}(z) +b_n ^{[4]}P_{n-2}(z)+b_n ^{[5]}P_{n-3}(z) \;,\label{From-second-appell-type-DxSx2}
\end{align}
for each $n=0,1,\ldots$, where 
\begin{align*}
a_n&=r_{n+1}-(4\alpha ^2-3)r_n-\alpha\;,~b_n=\big( B_n-(4\alpha ^2-3)B_{n-1}\big)r_n\;,\\
c_n&=r_{n-1}C_n -(4\alpha ^2 -3)r_nC_{n-1},~b_n ^{[1]}=a_{n+1}-(2\alpha ^2-1) a_n \;,\\
b_n ^{[2]}&=b_{n+1}-(2\alpha ^2-1)b_n -2(\alpha ^2-1)a_nB_n\;,\\
b_n ^{[3]}&=c_{n+1}-(2\alpha ^2-1)c_n +(B_n-(2\alpha ^2-1)B_{n-1})b_n+(a_{n-1}-(2\alpha ^2-1)a_n)C_n \;,\\
b_n ^{[4]}&=(B_n -(2\alpha ^2-1)B_{n-2})c_n+b_{n-1}C_n -(2\alpha ^2-1)b_nC_{n-1}, \\
b_n ^{[5]}&=c_{n-1}C_n-(2\alpha ^2-1) c_nC_{n-2},\quad c_n ^{[1]}=r_{n+1}-(2\alpha ^2-1)r_n+\alpha\;,\\
c_n ^{[2]}&=\big(B_n-(2\alpha ^2-1)B_{n-1} \big)r_n,\quad c_n ^{[3]}=r_{n-1}C_n-(2\alpha ^2-1)r_nC_{n-1}\;.
\end{align*}
\end{lemma}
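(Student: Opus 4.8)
The plan is to run, step by step, the argument of Lemma~\ref{lemma-1-case-1}, with the operators $\mathcal{S}_q\mathcal{D}_q$ and $\mathcal{D}_q\mathcal{S}_q$ exchanging roles. The single new ingredient is a pivot identity adapted to $\mathcal{D}_q\mathcal{S}_q$: in \eqref{def-Sx^2f-to-use} I would eliminate $\mathcal{S}_q\mathcal{D}_q f$ by means of \eqref{def-DxnSxf} taken at $n=1$, that is $\mathcal{D}_q\mathcal{S}_q f=\alpha\,\mathcal{S}_q\mathcal{D}_q f+\texttt{U}_1\mathcal{D}_q^2 f$ (recall $\alpha_1=\alpha$, $\gamma_1=1$), and use the elementary identity $\alpha^2\texttt{U}_2-\texttt{U}_1^2=(\alpha^2-1)(z^2-\alpha^2)$ to obtain
\[
\alpha\,\mathcal{S}_q^2 f=(\alpha^2-1)(z^2-\alpha^2)\,\mathcal{D}_q^2 f+(\alpha^2-1)z\,\mathcal{D}_q\mathcal{S}_q f+\alpha f\;.
\]
This is precisely why $z^2-\alpha^2$ takes over the role that $\texttt{U}_2$ had in Lemma~\ref{lemma-1-case-1}.

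To reach \eqref{From-second-appell-type-Dx2} I would apply $\mathcal{S}_q$ and then $\mathcal{D}_q$ to the TTRR \eqref{TTRR_relation} (the reverse order to the one producing \eqref{star-1}, so that $\mathcal{D}_q\mathcal{S}_q P_{n\pm1}$ appear), using \eqref{def-Dx-fg}, \eqref{def-Sx-fg} and the images of $z$ and $z^2$ under $\mathcal{D}_q,\mathcal{S}_q$ given in \eqref{Dx-xnSx-xn}. Substituting the pivot identity and $\alpha\,\mathcal{S}_q\mathcal{D}_q P_n=\mathcal{D}_q\mathcal{S}_q P_n-\texttt{U}_1\mathcal{D}_q^2 P_n$ to clear $\mathcal{S}_q^2 P_n$ and $\mathcal{S}_q\mathcal{D}_q P_n$, one is left with
\[
2(\alpha^2-1)(z^2-\alpha^2)\mathcal{D}_q^2 P_n+\alpha P_n+(4\alpha^2-3)z\,\mathcal{D}_q\mathcal{S}_q P_n=\mathcal{D}_q\mathcal{S}_q P_{n+1}+B_n\mathcal{D}_q\mathcal{S}_q P_n+C_n\mathcal{D}_q\mathcal{S}_q P_{n-1}\;,
\]
and invoking \eqref{second-appell-type-case-DxSx} together with the single reduction $zP_{n-1}=P_n+B_{n-1}P_{n-1}+C_{n-1}P_{n-2}$ from \eqref{TTRR_relation} delivers \eqref{From-second-appell-type-Dx2} with the stated $a_n,b_n,c_n$. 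Then \eqref{From-second-appell-type-Sx2} follows at once by substituting \eqref{From-second-appell-type-Dx2}, \eqref{second-appell-type-case-DxSx} and the same reduction of $zP_{n-1}$ into the pivot identity applied to $P_n$, and expanding $a_n,b_n,c_n$ identifies $c_n^{[1]},c_n^{[2]},c_n^{[3]}$.

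For \eqref{From-second-appell-type-DxSx2} I would start from the identity $2\alpha\,\mathcal{S}_q\mathcal{D}_q P_n+(2\alpha^2-1)z\,\mathcal{D}_q^2 P_n=\mathcal{D}_q^2 P_{n+1}+B_n\mathcal{D}_q^2 P_n+C_n\mathcal{D}_q^2 P_{n-1}$, which holds for any monic OPS (it is \eqref{for-last-use-1}, obtained by applying $\mathcal{D}_q^2$ to \eqref{TTRR_relation} and then using \eqref{def-DxnSxf} at $n=1$). Multiplying it by $2(\alpha^2-1)(z^2-\alpha^2)$, replacing each term $2(\alpha^2-1)(z^2-\alpha^2)\mathcal{D}_q^2 P_m$, $m\in\{n+1,n,n-1\}$, by the right-hand side of \eqref{From-second-appell-type-Dx2}, and reducing the resulting products $zP_m$ to the basis $(P_k)_{k\geq0}$ by \eqref{TTRR_relation} produces an expansion of $4\alpha(\alpha^2-1)(z^2-\alpha^2)\mathcal{S}_q\mathcal{D}_q P_n$ in $P_{n+1},P_n,P_{n-1},P_{n-2},P_{n-3}$; a direct comparison shows that its coefficients are exactly $b_n^{[1]},\dots,b_n^{[5]}$.

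The conceptual content is light: since $(P_n)_{n\geq0}$ is a basis of $\mathcal{P}$, every relation above reduces to expanding a fixed polynomial in that basis, and all the operator calculus needed is contained in the Proposition. The one genuine obstacle is the bookkeeping---the products $zP_m$ and $z^2P_m$ must be reduced repeatedly by \eqref{TTRR_relation}, down to index $n-3$, without losing terms, and the two equivalent forms of the commutation rule ($\mathcal{D}_q\mathcal{S}_q$ against $\alpha\,\mathcal{S}_q\mathcal{D}_q$) must be used consistently so that the constants $\alpha$, $2\alpha^2-1$ and $4\alpha^2-3$ come out with the correct signs in all the listed coefficients.
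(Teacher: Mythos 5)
Your proposal is correct and follows essentially the paper's own route: the pivot identity you derive is exactly the paper's relation $\alpha\mathcal{S}_q^2f=(\alpha^2\texttt{U}_2-\texttt{U}_1^2)\mathcal{D}_q^2f+\texttt{U}_1\mathcal{D}_q\mathcal{S}_qf+\alpha f$, your intermediate equation with the $(4\alpha^2-3)z$ and $2(\alpha^2-1)(z^2-\alpha^2)$ coefficients is precisely the paper's \eqref{star-1-2}, and your treatment of \eqref{From-second-appell-type-DxSx2} via \eqref{for-last-use-1} multiplied by $2(\alpha^2-1)(z^2-\alpha^2)$ coincides with the paper's step (which cites \eqref{for-later-use-1}, the same identity before eliminating $\mathcal{D}_q\mathcal{S}_q$). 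The coefficient identifications you indicate reproduce the stated $a_n,b_n,c_n$, $c_n^{[i]}$ and $b_n^{[i]}$.
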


\begin{proof}
From \eqref{def-Sx^2f} using \eqref{def-Sx-fg} yields
\begin{align}\label{def-Sx^2f-to-use2}
\alpha \mathcal{S}_q ^2f=\big(\alpha^2 \texttt{U}_2 -\texttt{U}_1 ^2 \big)\mathcal{D}_q ^2f +\texttt{U}_1\mathcal{D}_q\mathcal{S}_qf +\alpha f\;.
\end{align} 
We apply successively the operators $\mathcal{S}_q$ and $\mathcal{D}_q$ to the TTRR \eqref{TTRR_relation} satisfied by the monic OPS $(P_n)_{n\geq 0}$ solution of \eqref{second-appell-type-case-DxSx}. Using \eqref{def-Dx-fg}, \eqref{def-Sx-fg} and \eqref{def-Sx^2f-to-use2}, we obtain the following equation.
\begin{align}\label{star-1-2}
\big(\alpha ^2z+3\texttt{U}_1(z) \big)\mathcal{D}_q& \mathcal{S}_q P_n (z)+2\big(\alpha ^2 \texttt{U}_2(z) -\texttt{U}_1 ^2(z)\big)\mathcal{D}_q ^2P_n(z)+\alpha P_n(z)\nonumber \\
&=\mathcal{D}_q\mathcal{S}_qP_{n+1}(z)+B_n\mathcal{D}_q\mathcal{S}_qP_n(z)+C_n\mathcal{D}_q\mathcal{S}_qP_{n-1}(z)\;,
\end{align}
since $\mathcal{D}_q\texttt{U}_2=2\alpha\texttt{U}_1$ and $\mathcal{S}_q\texttt{U}_2=\alpha ^2\texttt{U}_2 +\texttt{U}_1 ^2$.
Finally \eqref{From-second-appell-type-Dx2} is obtained from \eqref{star-1-2} by using successively \eqref{second-appell-type-case-DxSx} and the TTRR \eqref{TTRR_relation}.
Now from \eqref{def-Sx^2f-to-use2}, we may also write \eqref{star-1-2} as
\begin{align*}
2\alpha \mathcal{S}_q ^2P_n(z)&-\alpha P_n(z)+(2\alpha ^2-1)z\;\mathcal{D}_q\mathcal{S}_qP_n(z) \\
&=\mathcal{D}_q\mathcal{S}_qP_{n+1}(z)+B_n\mathcal{D}_q\mathcal{S}_qP_n(z)+C_n\mathcal{D}_q\mathcal{S}_qP_{n-1}(z)\;.
\end{align*}
Equation \eqref{From-second-appell-type-Sx2} is obtained from this equation using \eqref{TTRR_relation} and \eqref{second-appell-type-case-DxSx}. Equation \eqref{From-second-appell-type-DxSx2} is obtained by multiplying \eqref{for-later-use-1} by $2(\alpha ^2-1)(z^2-\alpha ^2)$ using successively \eqref{second-appell-type-case-DxSx}, \eqref{From-second-appell-type-Dx2} and again the TTRR \eqref{TTRR_relation}.  
\end{proof}

\begin{lemma}
Let $(P_n)_{n\geq 0}$ be a monic OPS satisfying \eqref{second-appell-type-case-DxSx}. The following system of difference equations holds
\begin{align}
&r_{n+2} ~-2(2\alpha ^2 -1)r_{n+1} +r_n  =0\;,\label{eqS11}\\
&t_{n+2} -2(2\alpha ^2-1)t_{n+1}+t_n=0,~\quad t_n:=r_n/C_n\;, \label{eqS22}\\
&r_{n+1}B_{n+1}-(4\alpha ^2 -3)(r_n+r_{n+1})B_n +r_nB_{n-1}=0\;,\label{eqS33}\\
&t_{n+3}B_{n+2}-(t_{n+2}+t_{n+1})B_{n+1}+t_{n}B_{n}=0\;,\label{eqS44}\\
t_{n+2}&(C_{n+1}-1/4)-2t_n(C_n-1/4)+t_{n-2}(C_{n-1}-1/4)\nonumber \\
&\quad \quad \quad \quad \quad \quad =t_n\left[B_n ^2 -2(2\alpha ^2 -1)B_nB_{n-1}+B_{n-1} ^2  \right]\;,\label{eqS55}
\end{align}
where $B_n$ and $C_n$ are the coefficients of the TTRR \eqref{TTRR_relation} satisfied by $(P_n)_{n\geq 0}$.
\end{lemma}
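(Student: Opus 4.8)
The plan is to repeat, essentially line for line, the argument that produced \eqref{eqS1}--\eqref{eqS5} in the proof of Lemma~\ref{lemma-2-case-1}; the only new ingredient is the operator identity that makes the hypothesis $\mathcal{D}_q\mathcal{S}_qP_n=r_nP_{n-1}$ directly usable. First I would take \eqref{for-later-use-1} --- which holds for every monic OPS, having been obtained by applying $\mathcal{D}_q^2$ to the TTRR \eqref{TTRR_relation} --- and feed into it \eqref{def-DxnSxf} with $n=1$ and $f=P_n$; since $\alpha_1=\alpha$ and $\gamma_1=1$ this reads $\mathcal{D}_q\mathcal{S}_qP_n=\alpha\,\mathcal{S}_q\mathcal{D}_qP_n+\texttt{U}_1\mathcal{D}_q^2P_n$, so it lets me eliminate $\mathcal{S}_q\mathcal{D}_qP_n$ from \eqref{for-later-use-1}. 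Using $\texttt{U}_1(z)=(\alpha^2-1)z$ and $\alpha^2z-\texttt{U}_1(z)=z$, the result collapses to
\begin{align*}
z\,\mathcal{D}_q^2P_n(z)+2\,\mathcal{D}_q\mathcal{S}_qP_n(z)=\mathcal{D}_q^2P_{n+1}(z)+B_n\mathcal{D}_q^2P_n(z)+C_n\mathcal{D}_q^2P_{n-1}(z)\;,
\end{align*}
whereupon \eqref{second-appell-type-case-DxSx} replaces $\mathcal{D}_q\mathcal{S}_qP_n$ by $r_nP_{n-1}$. This is the counterpart, in the present situation, of \eqref{for-last-use-1}.

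Next I would multiply the last identity by $2(\alpha^2-1)(z^2-\alpha^2)$, exactly as \eqref{for-last-use-1} is multiplied by $2\alpha\texttt{U}_2$ in Section~\ref{first-case-main-results}. On the left, \eqref{From-second-appell-type-Dx2} turns $2(\alpha^2-1)(z^2-\alpha^2)\mathcal{D}_q^2P_n$ into $a_nP_n+b_nP_{n-1}+c_nP_{n-2}$, and then $z(a_nP_n+b_nP_{n-1}+c_nP_{n-2})$ together with the term $4(\alpha^2-1)r_n(z^2-\alpha^2)P_{n-1}$ are expanded in the basis $(P_k)_k$ by iterating \eqref{TTRR_relation}; on the right, \eqref{From-second-appell-type-Dx2} is applied with indices $n+1$, $n$ and $n-1$. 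What remains is an identity $\sum_{j=0}^{4}\lambda_{n,j}\,P_{n+1-j}=0$ holding for every $n$, so, $(P_n)_{n\ge0}$ being a basis of $\mathcal{P}$, each $\lambda_{n,j}$ must vanish. This gives five difference equations: the analogues of \eqref{min-eq1}--\eqref{min-eq5}, now carrying the coefficients $a_n,b_n,c_n$ of Lemma~\ref{lemma-1-case-2} and a right-hand side with the factor $4(\alpha^2-1)r_n$ rather than $4\alpha^2(\alpha^2-1)k_n$, since here the coefficient of the ``Appell'' term is $2$ in place of $2\alpha$.

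It then remains to convert these five relations into \eqref{eqS11}--\eqref{eqS55} by inserting the explicit $a_n,b_n,c_n$. The coefficient of $P_{n+1}$ gives $a_{n+1}-a_n=4(\alpha^2-1)r_n$, which with $a_n=r_{n+1}-(4\alpha^2-3)r_n-\alpha$ and $4\alpha^2-2=2(2\alpha^2-1)$ reduces to \eqref{eqS11}; the coefficient of $P_{n-3}$ gives $c_{n-1}C_n-c_nC_{n-2}=4(\alpha^2-1)r_nC_{n-1}C_{n-2}$, and dividing by $C_nC_{n-1}C_{n-2}$ and writing $t_n=r_n/C_n$ yields \eqref{eqS22}. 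Substituting \eqref{eqS11} and \eqref{eqS22} (where needed) into the coefficients of $P_n$, $P_{n-1}$ and $P_{n-2}$ and simplifying then produces, respectively, \eqref{eqS33}, \eqref{eqS55} and \eqref{eqS44}. I expect the only genuinely laborious step --- and the one where a slip is easiest --- to be this last piece of bookkeeping: relative to Lemma~\ref{lemma-2-case-1} the multiplier is $(z^2-\alpha^2)$ rather than $\texttt{U}_2=(\alpha^2-1)(z^2-1)$ and the constant $4\alpha^2-3$ appears inside $a_n,b_n,c_n$ in place of $2\alpha^2-1$, so the cancellations leading to the clean forms \eqref{eqS33}--\eqref{eqS55} have to be verified term by term; no idea beyond those of Section~\ref{first-case-main-results} is needed.
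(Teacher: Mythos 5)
Your proposal is correct and follows essentially the same route as the paper: the identity $z\,\mathcal{D}_q^2P_n+2\,\mathcal{D}_q\mathcal{S}_qP_n=\mathcal{D}_q^2P_{n+1}+B_n\mathcal{D}_q^2P_n+C_n\mathcal{D}_q^2P_{n-1}$ you derive is exactly what the paper multiplies by $2(\alpha^2-1)(z^2-\alpha^2)$ to get \eqref{interm-last-case-01}, and the subsequent expansion via \eqref{From-second-appell-type-Dx2}, the TTRR (in particular the expansion of $(z^2-\alpha^2)P_{n-1}$), and the vanishing of the coefficients of $P_{n+1},\ldots,P_{n-3}$ reproduces the paper's five intermediate relations. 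Your bookkeeping claims check out (e.g.\ the $P_{n-2}$ coefficient, after inserting $b_n,c_n$ and using \eqref{eqS22}, indeed collapses to \eqref{eqS44}), so the argument is the paper's, with slightly more detail in the final reduction than the paper's ``The result follows.''
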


\begin{proof}
As in the proof of Lemma \ref{lemma-2-case-1}, we multiply \eqref{for-last-use-1} by $2(\alpha ^2-1)(z^2-\alpha ^2)$ using \eqref{def-DxnSxf} for $n=1$ to obtain
\begin{align}
4(\alpha ^2 -1)(z^2-\alpha ^2)&\mathcal{D}_q\mathcal{S}_qP_n(z) +2(\alpha ^2 -1)(z^2-\alpha ^2)z\mathcal{D}_q ^2P_n(z)\nonumber \\
&=2(\alpha ^2 -1)(z^2-\alpha ^2)\mathcal{D}_q ^2 \big(P_{n+1}(z)+B_nP_n(z)+C_nP_{n-1}(z)\big)\;.\label{interm-last-case-01}
\end{align}
Also, from the TTRR, on may obtain the following relation
\begin{align*}
(z^2 -\alpha ^2)&P_n(z)= P_{n+1}(z)+(B_n+B_{n-1})P_n(z)+C_{n-1}C_{n-2}P_{n-3}(z)\\
&+(C_n+B_{n-1} ^2 +C_{n-1} -\alpha ^2)P_{n-1}(z)+(B_{n-1}+B_{n-2})C_{n-1}P_{n-2}(z)\;.
\end{align*}
Taking into account this equation together with \eqref{second-appell-type-case-DxSx}, \eqref{From-second-appell-type-Dx2} and the TTRR \eqref{TTRR_relation}, \eqref{interm-last-case-01} becomes a vanishing linear combination of polynomials $P_{n+1}$, $P_n$, $P_{n-1}$, $P_{n-2}$ and $P_{n-3}$, for each $n$. Since this is a base of the space $\mathcal{P}$, all coefficients of the mentioned linear combination are zero and so the following system of equations holds. 
\begin{align*}
&4(\alpha ^2-1)r_n +a_n =a_{n+1}\;,\\
&4(\alpha ^2 -1)r_n(B_n+B_{n-1}) +b_n=b_{n+1}\;,\\
&4(\alpha ^2-1)r_n(C_n+B_{n-1} ^2 -\alpha ^2) +(a_n-a_{n-1})C_n+b_n(B_{n-1}-B_n)=c_{n+1}-c_n\;,\\
&4(\alpha ^2-1)r_n(B_{n-1}+B_{n-2})C_{n-1}+b_nC_{n-1}-b_{n-1}C_n=c_n(B_n-B_{n-2})\;,\\
&4(\alpha ^2-1)r_nC_{n-1}C_{n-2}+c_nC_{n-2}=c_{n-1}C_n\;.
\end{align*}
The result follows.
\end{proof}

\begin{theorem}\label{main-theo-for-DxSx}
The only monic OPS, $(P_n)_{n\geq 0}$, for which  
\begin{align}
\mathcal{D}_q\mathcal{S}_qP_{n}(z)= k_nP_{n-1}(z) \quad \quad (n=0,1,\ldots)\;,\label{appell-case-DxSx-solved}
\end{align}
is the Rogers $q^2$-Hermite or Rogers $q^{-2}$-Hermite polynomial.
\end{theorem}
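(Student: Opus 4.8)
The plan is to follow the scheme of the proof of Theorem~\ref{main-theo-for-SxDx}, now working from the difference system \eqref{eqS11}--\eqref{eqS55}. Write $P_n(z)=z^n+f_nz^{n-1}+g_nz^{n-2}+\cdots$ with $f_0=g_1=0$, so that $B_n=f_n-f_{n+1}$ and $C_n=g_n-g_{n+1}-f_nB_n$. The first step is to recover the initial conditions: applying $\mathcal{D}_q\mathcal{S}_q$ to $P_n$ and matching the three coefficients of highest degree in \eqref{appell-case-DxSx-solved} by means of \eqref{Dx-xnSx-xn} gives $k_n=\alpha_n\gamma_n$, together with the second-case analogue of \eqref{expr-fn-gn}, namely $k_{n-1}f_n=k_nf_{n-1}$ and $k_{n-2}g_n=k_ng_{n-1}-\alpha_nu_n-\widehat u_n\gamma_{n-2}$. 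The first of these shows $f_n$ is a constant multiple of $k_n$, whence $B_n$ is proportional to $k_{n+1}-k_n$ (which one checks to be consistent with \eqref{eqS33}); combining the second with $g_n=-\sum_{l=1}^{n-1}C_l$ (valid once $f_n\equiv0$) produces a summed identity of the form $k_nC_{n+1}=(k_{n+2}-k_n)\sum_{l=1}^nC_l+\alpha_{n+2}u_{n+2}+\widehat u_{n+2}\gamma_n$, the analogue of \eqref{initial-conditions-first-appell-SxDx-02}.

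Since $2(2\alpha^2-1)=q+q^{-1}$, the characteristic equation attached to \eqref{eqS22} has roots $q$ and $q^{-1}$, so $t_n:=k_n/C_n=r_1q^n+r_2q^{-n}$ with $(r_1,r_2)\neq(0,0)$. Suppose first $r_2\neq0$ (the sub-case $r_2=0$, which leads to the Rogers $q^{-2}$-Hermite family, is handled in exactly the same way). As $t_n$ and $B_n\propto k_{n+1}-k_n$ are explicit combinations of $q^{\pm n}$, the left-hand side of \eqref{eqS44} becomes a linear combination of the linearly independent sequences $q^{\pm2n}$ together with a constant sequence that cancels automatically; the coefficient of $q^{-2n}$ is a nonzero multiple (as $q\neq1$) of $B_0$, which therefore vanishes, so $B_n=0$ for all $n$.

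With $B_n\equiv0$, dividing \eqref{eqS55} by $t_n$ and letting $n\to\infty$ (using $t_{n\pm2}/t_n\to q^{\mp2}$ and $q^{-2}-2+q^2=(q^{-1}-q)^2\neq0$) gives $\lim_nC_n=1/4$; comparing with $\lim_nC_n=\lim_n\alpha_n\gamma_n/(r_1q^n+r_2q^{-n})=q^{1/2}/\big(2r_2(1-q)\big)$ pins down $r_2$ and leaves the one-parameter family $C_n=(1-q^{2n})/\big(4(1-rq^{2n})\big)$. Substituting this into the full equation \eqref{eqS55} one finds it is satisfied exactly for $r\in\{0,1\}$, and substituting into the summed identity above (already its first instance $k_1C_2=(k_3-k_1)C_1+\alpha_3u_3+\widehat u_3$ suffices) excludes $r=1$, leaving $C_{n+1}=\tfrac14(1-q^{2n+2})$. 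The sub-case $r_2=0$ gives in the same way $C_{n+1}=\tfrac14(1-q^{-2n-2})$. Since $B_n=0$ with $C_{n+1}=\tfrac14(1-q^{2(n+1)})$ (resp. $\tfrac14(1-q^{-2(n+1)})$) is precisely the three-term recurrence relation of the monic Rogers $q^2$-Hermite (resp. Rogers $q^{-2}$-Hermite) polynomials, the theorem follows.

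I expect the elimination of the residual parameter $r$ to be the real obstacle. The difference system \eqref{eqS11}--\eqref{eqS55} by itself does not force $r=0$: indeed $B_n\equiv0$, $C_n\equiv\tfrac14$ (the value $r=1$) satisfies all of \eqref{eqS11}--\eqref{eqS55} but is not a solution of \eqref{appell-case-DxSx-solved}, so the extra input from the coefficient identification (the summed identity) is genuinely needed. The other delicate point is the asymptotic bookkeeping behind $B_n\equiv0$, and checking that the two sub-cases $r_2\neq0$ and $r_2=0$ exhaust all regular solutions and produce exactly the two announced Hermite families.
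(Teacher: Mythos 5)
Your proposal is correct, and it follows the same overall scheme as the paper's proof (identify $k_n$, $f_n$, $g_n$ from the leading coefficients of \eqref{appell-case-DxSx-solved}, then exploit the difference system \eqref{eqS11}--\eqref{eqS55}); however, it departs from the paper at two points, and at both your version is the one that actually works. First, the paper disposes of $B_n$ by asserting that $B_n=B_0\alpha_{2n+1}/\alpha$ satisfies \eqref{eqS33} if and only if $B_0=0$; this is not so: writing everything in powers of $q^{n/2}$ (or checking numerically) one finds that \eqref{eqS33} is satisfied \emph{identically} by $B_n=B_0\alpha_{2n+1}/\alpha$ for every $B_0$, so \eqref{eqS33} alone cannot force $B_0=0$. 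Your route through \eqref{eqS44} does work: expanding $t_{n+3}B_{n+2}-(t_{n+2}+t_{n+1})B_{n+1}+t_nB_n$ in the independent sequences $q^{2n}$, $q^{-2n}$, $1$, the constant part cancels and the coefficient of $q^{-2n}$ (resp. $q^{2n}$) equals $\tfrac{r_2B_0}{2\alpha}q^{-11/2}(1-q^2)(1-q^3)$ (resp. the symmetric expression in $r_1$), so $(r_1,r_2)\neq(0,0)$ gives $B_0=0$. Second, you correctly observe that the system \eqref{eqS11}--\eqref{eqS55} by itself admits the spurious solution $B_n\equiv0$, $C_n\equiv\tfrac14$ (your $r=1$), so the extra information from the coefficient identification is indispensable; your summed identity $k_nC_{n+1}=(k_{n+2}-k_n)\sum_{l=1}^nC_l+\alpha_{n+2}u_{n+2}+\widehat u_{n+2}\gamma_n$ is the correct second-case analogue of \eqref{initial-conditions-first-appell-SxDx-02} (its instance $n=1$ already excludes $C_n\equiv\tfrac14$, while $r=0$ satisfies it), exactly parallel to the way the first-case proof combines \eqref{initial-conditions-first-appell-SxDx-02} with \eqref{eqS5}; the paper's published proof leaves this step implicit. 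The only loose point in your write-up is the sub-case $r_2=0$, which is not handled "in exactly the same way": there $C_n$ diverges, so the limit argument giving $\lim_n C_n=\tfrac14$ is unavailable and should be replaced by substituting $C_n=k_n/(r_1q^n)$ directly into \eqref{eqS55}, where the $q^{-2n}$ part cancels automatically and the constant part pins $r_1$, yielding $C_{n+1}=\tfrac14\bigl(1-q^{-2(n+1)}\bigr)$; with that small repair your argument is complete and, on the two delicate steps, more reliable than the printed one.
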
  

\begin{proof}
Let $(P_n)_{n\geq 0}$ be a monic OPS solution of  %with respect to the functional ${\bf u} \in \mathcal{P}^*$ and satisfying 
\eqref{appell-case-DxSx-solved}. Following the proof of Theorem \ref{main-theo-for-SxDx} we obtain 
\begin{align*}
k_n=\gamma_{2n}/2~,\quad B_n=B_0\alpha_{2n+1}/\alpha\;,
\end{align*}
by identifying the two first coefficients of terms with higher degrees in \eqref{appell-case-DxSx-solved}. We then show that this expression of $B_n$ satisfies \eqref{eqS33} if and only if $B_0=0$, and so $B_n=0$, for each $n$. Finally from \eqref{eqS22} and \eqref{eqS44}, we deduce
$$B_n=0,\;\quad C_{n+1}=\mbox{$\frac14$}(1-q^{2s(n+1)})\quad (n=0,1,\ldots)\;,$$ and so we obtain $$P_n=Q_n\big(x;sq^{s/2},-sq^{s/2}|q^s \big)\quad s=\pm 1,\quad (n=0,1,\ldots)\;. $$ Hence the result follows. 
\end{proof}

\begin{theorem}
Let ${\bf u}\in \mathcal{P}^*$ be a regular functional and $(P_n)_{n\geq 0}$ the corresponding monic OPS. Then $(P_n)_{n\geq 0}$ is the solution of \eqref{appell-case-DxSx-solved} if and only if ${\bf u}$ is a solution of the following functional equations
\begin{align}
q^{s/2}(q^s-1){\bf S}_q{\bf D}_q{\bf u}&=2z{\bf u}\;,\label{funct-eq11}\\
4(\alpha ^2-1)q^{5s/2}{\bf D}_q ^2 \big((z^2-\alpha ^2){\bf u} \big)&=(-4z^2+1-q^{2s}){\bf u}\;,\label{funct-eq22}\\
4q^{2s}{\bf S}_q ^2{\bf u}&=\big(-4z^2 +1+3q^{2s}  \big){\bf u}\;,\label{funct-eq33}\\
2q^{3s}(1-q^s){\bf D}_q {\bf S}_q\big( (z^2-\alpha ^2) {\bf u} \big) &=z(-4z^2+q^{4s}+q^{2s}+2){\bf u}\;,\label{funct-eq44}
\end{align} 
with $s=\pm 1$.
\end{theorem}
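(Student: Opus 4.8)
The proof will follow the exact template established in the proof of Theorem \ref{main-result-3}, adapted to the second case. The plan is to prove both implications by exploiting the dual basis together with the structural relations from Lemma \ref{lemma-1-case-2}.

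\textbf{Necessity.} Assume $(P_n)_{n\geq 0}$ is the monic OPS solution of \eqref{appell-case-DxSx-solved}, and let $({\bf a}_n)_{n\geq 0}$ be its dual basis. First I would compute $\langle {\bf S}_q{\bf D}_q{\bf a}_n,P_l\rangle = \langle {\bf a}_n,\mathcal{D}_q\mathcal{S}_q P_l\rangle = k_l\langle{\bf a}_n,P_{l-1}\rangle = k_{n+1}\delta_{n+1,l}$, which gives ${\bf S}_q{\bf D}_q{\bf a}_n = k_{n+1}{\bf a}_{n+1}$. Taking $n=0$ and using \eqref{expression-an}, the TTRR coefficients from Theorem \ref{main-theo-for-DxSx} (namely $B_0=0$, $C_1=\frac14(1-q^{2s})$, $k_0$-value $\gamma_{2\cdot 1}/2$... more precisely $k_1=\gamma_2/2$), together with ${\bf a}_0 = \langle{\bf u},1\rangle^{-1}{\bf u}$ and ${\bf a}_1 = \langle{\bf u},P_1^2\rangle^{-1}P_1{\bf u}$ with $P_1(z)=z$, yields \eqref{funct-eq11}. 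Then, applying the same reasoning to \eqref{From-second-appell-type-Dx2} gives $2(\alpha^2-1){\bf D}_q^2\big((z^2-\alpha^2){\bf a}_n\big) = a_n{\bf a}_n + b_{n+1}{\bf a}_{n+1} + c_{n+2}{\bf a}_{n+2}$; setting $n=0$ and inserting the explicit $a_n,b_n,c_n$ from Lemma \ref{lemma-1-case-2} evaluated at the Rogers $q^{\pm2}$-Hermite data produces \eqref{funct-eq22}. Equations \eqref{funct-eq33} and \eqref{funct-eq44} follow identically from \eqref{From-second-appell-type-Sx2} and \eqref{From-second-appell-type-DxSx2} respectively.

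\textbf{Sufficiency.} Conversely, suppose $(P_n)_{n\geq 0}$ is the monic OPS with respect to ${\bf u}$ satisfying \eqref{funct-eq11}--\eqref{funct-eq44}; I will only need \eqref{funct-eq11} and \eqref{funct-eq33}. Mimicking the derivation of \eqref{relation-for-converse}, I apply ${\bf D}_q$ to \eqref{funct-eq11} and use \eqref{DxnSx-u} (for $n=1$, with ${\bf u}$ replaced by ${\bf D}_q{\bf u}$) to rewrite ${\bf D}_q{\bf S}_q{\bf D}_q{\bf u}$ in terms of ${\bf S}_q{\bf D}_q^2{\bf u}$ and $\texttt{U}_1{\bf D}_q^3{\bf u}$, then feed in \eqref{funct-eq33} and \eqref{funct-eq11} again; combining with \eqref{def-fD_x-u} to eliminate the $\texttt{U}_1{\bf D}_q(\,\cdot\,)$ term, the algebra should collapse to a single distributional equation of the form ${\bf D}_q(\phi{\bf u}) = {\bf S}_q(\psi{\bf u})$ with $\phi(z) = \kappa(z^2-\alpha^2)$ (or $\phi(z)=\kappa(z^2-1)$, whichever emerges from the normalization) and $\psi(z)=z$ for an explicit constant $\kappa$ depending on $q^{s/2}$. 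Then I invoke Theorem \ref{main-Thm1} with these $\phi,\psi$ to read off $B_n=0$ and $C_{n+1}=\frac14(1-q^{2s(n+1)})$ via \eqref{Bn-Cn-Dx}, identify $P_n = Q_n(x;sq^{s/2},-sq^{s/2}|q^s)$, and finally apply Theorem \ref{main-theo-for-DxSx} to conclude that this OPS indeed satisfies \eqref{appell-case-DxSx-solved}. As noted in the remark preceding the analogous proof, the alternative route through \eqref{funct-eq22} and \eqref{funct-eq44} gives the same conclusion.

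\textbf{Main obstacle.} The delicate point is the sufficiency direction: coaxing the combination of \eqref{funct-eq11}, \eqref{funct-eq33}, \eqref{DxnSx-u} and \eqref{def-fD_x-u} into exactly the Askey-Wilson--Pearson form \eqref{x-pearson}. One must track the operator identities carefully—in particular the factor $q^{s/2}$ in \eqref{funct-eq11} and the operator $\alpha\mathcal{S}_q^2$ decomposition \eqref{def-Sx^2f}—since a misplaced power of $q$ would spoil the identification of $\phi$ and $\psi$; the Askey-Wilson operator does not commute with multiplication in a naive way, so the bookkeeping in \eqref{def-Dx-fg}--\eqref{def-fD_x-u} is where all the care is needed. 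Everything else is a routine transcription of the first-case arguments, with $\texttt{U}_2$ replaced by $2(\alpha^2-1)(z^2-\alpha^2)$ and the parameters shifted from $(s,-s|q^s)$ to $(sq^{s/2},-sq^{s/2}|q^s)$.
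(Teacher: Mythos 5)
Your proposal takes exactly the route the paper intends: the paper's own proof of this theorem is nothing more than the statement that one proceeds as in Theorem \ref{main-result-3}, and your outline is a faithful transcription of that template (dual basis plus Lemma \ref{lemma-1-case-2} for necessity; \eqref{funct-eq11} and \eqref{funct-eq33} combined with \eqref{DxnSx-u}, \eqref{def-fD_x-u} and Theorem \ref{main-Thm1} for sufficiency), so in substance you and the paper agree.

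One concrete slip to fix in the necessity step: the functional $\mathbf{D}_q$ is defined with a minus sign, $\langle \mathbf{D}_q{\bf w},f\rangle=-\langle {\bf w},\mathcal{D}_q f\rangle$, so the correct computation is $\langle \mathbf{S}_q\mathbf{D}_q{\bf a}_n,P_l\rangle=\langle \mathbf{D}_q{\bf a}_n,\mathcal{S}_qP_l\rangle=-\langle {\bf a}_n,\mathcal{D}_q\mathcal{S}_qP_l\rangle=-k_{n+1}\delta_{n+1,l}$, hence $\mathbf{S}_q\mathbf{D}_q{\bf a}_n=-k_{n+1}{\bf a}_{n+1}$, not $+k_{n+1}{\bf a}_{n+1}$ as you wrote (compare the paper's ${\bf D}_q{\bf S}_q{\bf a}_n=-k_{n+1}{\bf a}_{n+1}$ in the first case). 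This sign is not cosmetic: with ${\bf a}_1=\langle{\bf u},P_1^2\rangle^{-1}z{\bf u}$, $k_1=\alpha$ and $C_1=\frac14(1-q^{2s})$ from Theorem \ref{main-theo-for-DxSx}, it is precisely the minus sign that yields $q^{s/2}(q^s-1)\mathbf{S}_q\mathbf{D}_q{\bf u}=+2z{\bf u}$; dropping it produces \eqref{funct-eq11} with the wrong sign. The same bookkeeping (an odd number of $\mathbf{D}_q$'s carries a sign, an even number does not) must be tracked when you derive \eqref{funct-eq22}--\eqref{funct-eq44} and in the sufficiency computation, which you have only sketched ("the algebra should collapse") -- though in fairness the paper offers no more detail there either.
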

\begin{proof}
This result follows proceeding exactly as in Theorem \ref{main-result-3}.
\end{proof}

\begin{remark}
Although the results obtained here were proved for the $q$-quadratic lattices, they can be easily extended to quadratic lattices $x(s)=\mathfrak{c}_4s^2+\mathfrak{c}_5s+\mathfrak{c}_6$ by taking the appropriate limit as it was discussed in \cite{KDP2021}. 
\end{remark}

%\section*{Disclosure statement}
%No potential conflict of interest was reported by the author(s).

\section*{Acknowledgements}
We would like to thank Kenier Castillo for drawing our attention to this problem. The author D. Mbouna was partially supported by CMUP, member of LASI, which is financed by national funds through FCT - Fundac\~ao para a Ci\^encia e a Tecnologia, I.P., under the projects with reference UIDB/00144/2020 and UIDP/00144/2020. A. Suzuki is supported by the FCT grant 2021.05089.BD and partially supported by the Centre for Mathematics of the University of Coimbra-UIDB/00324/2020, funded by the Portuguese Government through FCT/ MCTES.

{

\end{document}